\newtheorem{theorem}{Theorem}[section]
\newtheorem*{thm}{Theorem}
\newtheorem{proposition}[theorem]{Proposition}
\newtheorem{lemma}[theorem]{Lemma}
\theoremstyle{definition}
\newtheorem{definition}[theorem]{Definition}
\newtheorem{example}[theorem]{Example}
\newtheorem{remark}[theorem]{Remark} 
\newtheorem{notation}[theorem]{Notation}
\newtheorem{corollary}[theorem]{Corollary}
\newtheorem{construction}[theorem]{Construction}
\begin{document}

\title{A Categorical View of Varieties of Ordered Algebras}
\author{J.~Adámek}
\address{Department of Mathematics, Faculty of Electrical Engineering, Czech Technical University in Prague, Czech Republic, and Institute for Theoretical Computer Science, Technical University Braunschweig, Germany}
\email{j.adamek@tu-bs.de}
\author{M.~Dostál}
\author{J.~Velebil}
\address{Department of Mathematics, Faculty of Electrical Engineering, Czech Technical University
	in Prague, Czech Republic}
\email{$\{$dostamat,velebil$\}$@fel.cvut.cz}
\thanks{J.~Ad\'{a}mek and M.~Dost\'{a}l acknowledge the support
	of the grant No.~19-0092S
	of the Czech Grant Agency.} 

\begin{abstract}
It is well known that classical varieties of $\Sigma$-algebras correspond bijectively to finitary monads on $\Set$.
We present an analogous result for varieties of ordered $\Sigma$-algebras, i.e., categories presented by inequations between $\Sigma$-terms.
We prove that they correspond bijectively to strongly finitary monads on $\Pos$.
That is, those finitary monads which preserve reflexive coinserters.
We deduce that strongly finitary monads have a coinserter presentation, analogous to the coequaliser presentation of finitary monads due to Kelly and Power.
We also show that these monads are liftings of finitary monads on $\Set$.
\end{abstract}

\maketitle

\begin{center}
\emph{Dedicated to John Power,\\ from whom we have learned so much,\\ on the occasion of his 60th birthday}
\end{center}

\section{Introduction}

Varieties of ordered algebras, i.e., classes of ordered $\Sigma$-algebras (for a finitary signature $\Sigma$) presented by inequations between $\Sigma$-terms, play an important role in universal algebra and computer science.
Example: ordered monoids with bottom as the unit, $e$, are presented by the inequation $e \leq x$ and the usual equations for classical monoids.
For every variety $\Vvar$ free algebras exist on all posets, that is, the forgetful functor $\Vvar \to \Pos$ has a left adjoint.
The corresponding monad $\Tmon$ on $\Pos$ will be proved to be \emph{strongly finitary}, which means that its underlying endofunctor $T$ preserves
\begin{enumerate}
	\item filtered colimits, and
	\item coinserters of reflexive pairs.
\end{enumerate}
In the above example of ordered monoids $\Tmon$ is a lifting of the word monad (of monoids) on $\Set$.
For every poset $X$ we have the poset $TX = X^*$ with the following order: a word $x_0 \dots x_{n-1}$ is smaller or equal to a word $w$ iff $w$ decomposes as $w = w_0 \dots w_{n-1}$ and each $w_i$ contains a letter $y_i \in X$ with $x_i \leq y_i$ in $X$.

Conversely, given a strongly finitary monad $\Tmon$ on $\Pos$, its Eilenberg-Moore category $\Pos^\Tmon$ will be proved to be isomorphic to a variety of ordered algebras.
This leads to the following main result of our paper:
\begin{thm}
The category of varieties of ordered algebras (with concrete functors as morphisms) is dually equivalent to the category of strongly finitary monads on $\Pos$.
\end{thm}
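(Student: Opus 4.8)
The plan is to exhibit two mutually inverse, contravariant functors between the category $\mathbf{Var}$ of varieties of ordered algebras (with concrete functors as morphisms) and the category $\mathbf{Mnd}_{\mathrm{sf}}$ of strongly finitary monads on $\Pos$ (with monad morphisms). On objects I would define $\Phi\colon\mathcal{V}\mapsto\mathbb{T}_{\mathcal{V}}$, the monad induced by the free--forgetful adjunction $F\dashv U\colon\mathcal{V}\to\Pos$; by the result announced in the introduction this monad is strongly finitary, so $\Phi$ really lands in $\mathbf{Mnd}_{\mathrm{sf}}$. In the converse direction I would set $\Psi\colon\mathbb{T}\mapsto\Pos^{\mathbb{T}}$, the Eilenberg--Moore category, which by the companion result is isomorphic to a variety. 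The content of the theorem is then that these assignments extend to functors that are inverse to each other up to natural isomorphism.

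The engine for the morphism level is the standard bijection: for monads $\mathbb{S},\mathbb{T}$ on $\Pos$, monad morphisms $\mathbb{S}\to\mathbb{T}$ correspond naturally to concrete functors $\Pos^{\mathbb{T}}\to\Pos^{\mathbb{S}}$ (those commuting \emph{strictly} with the forgetful functors to $\Pos$). Since this bijection respects identities and composition, $\Psi$ becomes a full and faithful contravariant functor $\mathbf{Mnd}_{\mathrm{sf}}\to\mathbf{Var}$ as soon as each $\Pos^{\mathbb{T}}$ is recognised as a variety. The same bijection supplies the action of $\Phi$ on morphisms: a concrete functor $G\colon\mathcal{V}\to\mathcal{W}$ with $U_{\mathcal{W}}G=U_{\mathcal{V}}$ is transported, via the monadicity isomorphisms below, to a concrete functor between Eilenberg--Moore categories, hence to a monad morphism $\mathbb{T}_{\mathcal{W}}\to\mathbb{T}_{\mathcal{V}}$, which reverses direction as required.

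Next I would verify the two round trips. For a strongly finitary monad, $\Phi\Psi(\mathbb{T})$ is the monad generated by the Eilenberg--Moore adjunction of $\mathbb{T}$, and this is $\mathbb{T}$ itself on the nose. For a variety, I would invoke monadicity of $U\colon\mathcal{V}\to\Pos$: the comparison functor $\mathcal{V}\to\Pos^{\mathbb{T}_{\mathcal{V}}}$ is an isomorphism of concrete categories, so $\Psi\Phi(\mathcal{V})\cong\mathcal{V}$ compatibly with the forgetful functors and with the morphism correspondence of the previous paragraph. Combining the two round trips with fullness and faithfulness of $\Psi$ yields that $\Phi$ and $\Psi$ are mutually quasi-inverse, giving the stated dual equivalence $\mathbf{Var}\simeq\mathbf{Mnd}_{\mathrm{sf}}^{\mathrm{op}}$.

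The genuinely substantial mathematics is already imported from the earlier sections, namely that $\mathbb{T}_{\mathcal{V}}$ is strongly finitary and that $\Pos^{\mathbb{T}}$ is a variety; relative to these inputs the remaining work is organisational, and I expect the main care to lie in keeping everything strict rather than up to equivalence. Concretely, the delicate points are: fixing the contravariance consistently; using monadicity of varieties over $\Pos$ to guarantee that \emph{every} concrete functor between varieties arises from a unique monad morphism (this is exactly what makes $\Phi$ full and faithful); and checking that the free-algebra adjunction for the variety $\Pos^{\mathbb{T}}$ reproduces the monad $\mathbb{T}$ itself, not merely an isomorphic copy. This last identity, together with the strict comparison isomorphism $\mathcal{V}\cong\Pos^{\mathbb{T}_{\mathcal{V}}}$, is the hinge on which the whole duality turns.
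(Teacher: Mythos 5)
Your proposal is correct and follows essentially the same route as the paper: both rest on the same three inputs (strong finitarity of $\Tmon_\Vvar$, the identification of $\Pos^\Tmon$ with a variety, and the strict comparison isomorphism $\Vvar\cong\Pos^{\Tmon_\Vvar}$ combined with the Barr--Wells bijection between monad morphisms and concrete functors), the only cosmetic difference being that the paper packages the argument as a single functor $R(\Vvar)=\Tmon_\Vvar$ shown to be full, faithful and essentially surjective, whereas you exhibit an explicit quasi-inverse; note only that the second round trip returns $\Tmon$ up to isomorphism of monads rather than on the nose, which suffices for the equivalence.
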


We thus obtain a bijective correspondence between varieties of ordered algebras and strongly finitary monads on $\Pos$.
This is analogous to the well-known correspondence between (classical) varieties and finitary monads on $\Set$, up to natural isomorphism.

Moreover, every variety of ordered algebras is a lifting of a classical variety.
This follows from the above bijective correspondence and the fact we prove that every strongly finitary monad $\Tmon$ on $\Pos$ is a lifting of a finitary monad $\wt{\Tmon}$ on $\Set$: for every poset $X$ the underlying set of $TX$ is $\wt{T}|X|$, and the underlying maps of $\eta_X$ and $\mu_X$ are $\wt{\eta}_{|X|}$ and $\wt{\mu}_{|X|}$, respectively.
Naturally, one classical variety can have many liftings, consider e.g.\ ordered monods (a 'minimal' lifting of the variety of monoids), compared with our example above.

\subsection*{Related results}
The bijective correspondence between varieties of ordered algebras and strongly finitary monads has been established already by Kurz and Velebil~\cite{kurz+velebil}.
However, the proof there was derived from technically involved results concerning the exactness (in $\Pos$-enriched sense) of these varieties.
Our present proof is much simpler.

Strongly finitary monads on enriched categories were studied by Kelly and Lack~\cite{kelly+lack:strongly-finitary}.
When specialised to $\Pos$ (enriched over itself as a cartesian closed category), their results yield a bijection between strongly finitary monads and equationally (!) presented classes of $\Sigma$-algebras.
However, here $\Sigma$ means a much more complex concept of signature, following the paper of Kelly and Power~\cite{kelly+power:adjunctions}: let $\Pos_\fp$ be a set of finite posets representing all of them up to isomorphism.
The signatures in $\Pos$ introduced in~\cite{kelly+power:adjunctions} are collections $\Sigma = (\Sigma_n)_{n \in \Pos_\fp}$ of posets $\Sigma_n$.
In the recent paper~\cite{adamek+chase+milius+schroder} finitary (ordinary as well as enriched) monads on $\Pos$ are studied.
They are related to inequationally specified classes of $\Sigma$-algebras for signatures $\Sigma$ that present a compromise between the classical signatures (used in the present paper) and those of Kelly and Power: they are collections of sets $\Sigma_n$ indexed by $n \in \Pos_\fp$.

\section{Finitary and Strongly Finitary Functors}

In the present section we recall finitary and strongly finitary endofunctors of $\Pos$.
We observe that a finitary endofunctor is strongly finitary iff it preserves reflexive coinserters.

\begin{remark}
\label{rem:intro-remarks}
\phantom{formatting}
\begin{enumerate}
	\item Throughout the paper we view $\Pos$ as the cartesian closed category with the hom-sets $\Pos(X,Y)$ ordered pointwise.
	All categories are understood to be enriched over $\Pos$.
	That is, hom-sets carry partial orders such that composition is monotone.
	All functors, limits, colimits and adjunctions are understood as enriched over $\Pos$.
	
	Thus when we say `endofunctor $H$ of $\Pos$' we automatically mean that it is locally monotone.
	Its underlying ordinary functor is denoted by $H_\ordi$.

	\item Colimits are understood to be weighted.
	Let us recall that for a given scheme, i.e., a small category $\D$, a \emph{weight} is a functor $\phi: \D^\op \to \Pos$.
	Example: given a poset $X$ and a diagram $D: \D \to \Pos$, the functor $\Pos(D\blank,X): \D^\op \to \Pos$ is a weight.
	The category of all weights is simply the functor category $[\D^\op,\Pos]$.
	
	A \emph{weighted colimit} of a diagram $D: \D \to \Pos$ of weight $\phi$ is a poset $\Colim{\phi}{D}$ together with an isomorphism
	\[
	\Pos(\Colim{\phi}{D},X) \cong [\D^\op,\Pos](\phi,\Pos(D\blank,X))
	\]
	natural in $X \in \Pos$.
	\item Every set is considered as a poset with the discrete order.
	In particular, every natural number $n$ is the discrete poset on the set $\{ 0, 1, \dots, n-1 \}$.
\end{enumerate}
\end{remark}

\begin{example}
\emph{Coinserters} are colimits of the scheme $\D$ given by a parallel pair
\[
\begin{tikzcd}
X
\arrow[r, "1", bend left]
\arrow[r, "0", swap, bend right]
&
Y
\end{tikzcd}
\]
The weight $\phi: \D^\op \to \Pos$ is as follows:
\[
	\tikz[
overlay]{
	\filldraw[fill=white,draw=black] (1.2,-1.2) rectangle (1.8,1.2);
}
\begin{tikzcd}[row sep=small, column sep=tiny]
	&
	\bullet
	\arrow[dd, no head]
	&& \\
	&
	&& \bullet \; \phi Y
	\arrow[ull, "\phi 1", swap]
	\arrow[dll, "\phi 0"]
	\\
	\phi X & \bullet &&
\end{tikzcd}
\]
Thus, a diagram in $\Pos$ is a parallel pair
\[
f_0,f_1: A \to B
\]
of monotone maps (considered as an ordered pair $(f_0,f_1)$, of course).
And the coinserter is a morphism $c: B \to C$ universal w.r.t.\ $c \comp f_0 \leq c \comp f_1$.
\[
\begin{tikzcd}
A
\arrow[r, "f_1", bend left]
\arrow[r, "f_0", swap, bend right]
& B
\arrow[r, "c"]
\arrow[rd, "u"]
& C
\arrow[d, dotted, "v"]
\\
& & D
\end{tikzcd}
\]
That is:
\begin{enumerate}
	\item for every morphism $u: B \to D$ with $u \comp f_0 \leq u \comp f_1$ there exists a unique morphism $v: C \to D$ with $u = v \comp c$, and
	\item  the map $u \mapsto v$ is monotone: given $u' = v' \comp c$, then $u \leq u'$ implies $v \leq v'$.
\end{enumerate}
\end{example}

\begin{remark}
	\label{rem:poset-canonical-presentation}
	Every finite poset $P$ is a \emph{canonical coinserter} of a parallel pair
	\[
	\begin{tikzcd}
		k
		\arrow[r, "p_1", bend left]
		\arrow[r, "p_0", swap, bend right]
		&
		n
	\end{tikzcd}
	\]
	of morphisms in $\N$.
	Let $n$ be the number of elements of $P$ and $k$ the number of comparable pairs in $P$.
	Thus we can assume that $P$ has elements $0, \dots, n-1$, and we can index all comparable pairs as follows
	\[
	{p_0(t)} \leq {p_1(t)} \text{ for } t = 0, \dots, k-1.
	\] 
	This defines functions $p_0, p_1: k \to n$. The coinserter of this pair is carried by the identity map:
	\[
	\begin{tikzcd}
		k
		\arrow[r, "p_1", bend left]
		\arrow[r, "p_0", swap, bend right]
		&
		n
		\arrow[r, "\id"]
		&
		P
	\end{tikzcd}
	\]
\end{remark}

\begin{notation}
Denote by
\[
J: \Pos_\fp \to \Pos
\]
the full embedding of a subcategory $\Pos_\fp$ representing all finite posets up to isomorphism.
\end{notation}

\begin{remark}
\phantom{formatting}
\begin{enumerate}
	\item $\Pos$ is a free completion of $\Pos_\fp$ under filtered conical colimits.
	In the realm of ordinary categories this follows from~\cite{adamek+rosicky} (Theorem~1.46) since $\Pos$ is a locally finitely presentable category with finite posets precisely the finitely presentable objects.
	Thus, given an ordinary category $\K$ with filtered colimits, for every ordinary functor $H: \Pos_\fp \to \K$ there exists an extension $H': \Pos \to \K$ preserving filtered colimits, unique up to natural isomorphism.
	Filtered conical colimits in $\Pos$ have the property that given a colimit cocone $c_i: C_i \to C$ ($i \in I$), then for two morphisms $u,v: C \to X$ we have $u \leq v$ iff $u \comp c_i \leq v \comp c_i$ for all $i \in I$.
	It follows that $H'$ is locally monotone whenever $H$ is.
	Thus, the statement above holds also in the enriched sense.

	\item Following Kelly~\cite{kelly:structures} we call an endofunctor of $\Pos$ \emph{finitary} iff its underlying ordinary endofunctor is finitary (i.e., preserves ordinary filtered colimits).
\end{enumerate}
\end{remark}

We now turn to strongly finitary functors.

\begin{notation}
\label{not:variables}
The full subcategory of $\Pos$ on natural numbers (Remark~\ref{rem:intro-remarks} (3)) is denoted by $\N$, and the full embedding by
\[
I: \N \to \Pos.
\]
\end{notation}

\begin{definition}[\cite{kelly+lack:strongly-finitary}]
An endofunctor $H$ of $\Pos$ is called \emph{strongly finitary} if it is the left Kan extension of its restriction to $\N$. More precisely:
\[
H = \Lan{I}{H \comp I}.
\]
\end{definition}

\begin{remark}
In ordinary categories \emph{sifted colimits} are colimits of diagrams whose schemes $\D$ are (small) sifted categories.
This means categories such that colimits of diagrams $D: \D \to \Set$ commute with finite products.

In our enriched setting, sifted colimits are introduced analogously.
A weight $\phi: \D^\op \to \Pos$ is called \emph{sifted} if the functor $\Colim{\phi}{\blank}: [\D,\Pos] \to \Pos$ preserves finite (conical) products.
Sifted colimits then are colimits weighted by sifted weights.
\end{remark}

\begin{example}
\phantom{formatting}
\begin{enumerate}
	\item Filtered colimits are clearly sifted (the corresponding weighted colimits in $\Pos$ commute with finite limits).
	\item A pair $f_0,f_1: A \to B$ is called reflexive if there exists $i: B \to A$ with $f_0 \comp i = \id_B = f_1 \comp i$.
	Coinserters of reflexive pairs are sifted colimits.
	The proof is completely analogous to the fact that in ordinary categories coequalisers of reflexive pairs are sifted colimits (\cite{adamek+rosicky+vitale:what-are-sifted}, Example~1.2).
	We speak about \emph{reflexive coinserters}.
	Example: the canonical coinserters (Remark~\ref{rem:poset-canonical-presentation}) are clearly reflexive.
\end{enumerate}
\end{example}

\begin{theorem}[\cite{bourke:thesis}, Corollary~8.45]
The following conditions are equivalent for endofunctors $H$ of $\Pos$:
\begin{enumerate}
	\item $H$ is strongly finitary,
	\item $H$ preserves sifted colimits,
	\item $H$ is finitary and preserves reflexive coinserters, and
	\item $H = \Lan{I}{H \comp I}$.
\end{enumerate}
\end{theorem}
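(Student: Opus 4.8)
The equivalence (1)$\Leftrightarrow$(4) is nothing but the definition of strongly finitary, so the task is to link these with (2) and (3). I would set $\bar H := \Lan{I}{H \comp I}$, the left Kan extension along $I$ of the restriction of $H$ to $\N$, and write $\epsilon \colon \bar H \Rightarrow H$ for the counit of the adjunction $\Lan{I}{\blank} \dashv (\blank) \comp I$. Because $I$ is fully faithful, $\epsilon$ is invertible at every object of the form $In$. The plan is to establish the cycle (4)$\Rightarrow$(2)$\Rightarrow$(3)$\Rightarrow$(4), which amounts to showing that $H$ is strongly finitary exactly when $\epsilon$ is an isomorphism.

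The technical heart is a lemma asserting that $\bar H$ always preserves sifted colimits. I would prove it from the pointwise coend formula $\bar H X \cong \int^{n \in \N} \Pos(In, X) \cdot (H \comp I)(n)$. The sole point at which siftedness is used is that each representable $\Pos(In, \blank)$ preserves sifted colimits: since $In$ is the discrete poset on $n$ elements, $\Pos(In, X) \cong X^n$ naturally in $X$, and a sifted weight commutes with the $n$-fold product by definition. Granting this, for a sifted weight $\phi$ and a diagram $D$ I would compute $\bar H(\Colim{\phi}{D})$ by first pulling each $\Pos(In, \blank)$ through the sifted colimit, then using that the copower $(\blank) \cdot (H \comp I)(n)$ is a left adjoint and that the coend in $n$ commutes with $\Colim{\phi}{\blank}$ (colimits commute with colimits); the result is $\bar H(\Colim{\phi}{D}) \cong \Colim{\phi}{\bar H \comp D}$.

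With the lemma available, the three implications are short. For (4)$\Rightarrow$(2): if $\epsilon$ is invertible then $H \cong \bar H$, which preserves sifted colimits. For (2)$\Rightarrow$(3): the preceding examples record that filtered colimits and reflexive coinserters are sifted, so a functor preserving sifted colimits preserves both. For (3)$\Rightarrow$(4), I would invoke the canonical presentations: every finite poset $P$ is a reflexive coinserter of a pair $k \rightrightarrows n$ in $\N$ (Remark~\ref{rem:poset-canonical-presentation}), and every poset $X$ is the filtered conical colimit of its finite subposets. Both $H$ and $\bar H$ preserve reflexive coinserters and filtered colimits — $H$ by hypothesis and $\bar H$ by the lemma — while $\epsilon$ is already an isomorphism on the natural numbers $k$ and $n$. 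The universal property of coinserters then forces $\epsilon_P$ to be an isomorphism for every finite poset $P$, and preservation of the filtered colimit propagates this to $\epsilon_X$ for arbitrary $X$; hence $\epsilon$ is invertible and $H = \bar H$, i.e.\ (4) holds.

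I expect the main obstacle to be the lemma that $\bar H$ preserves sifted colimits — in effect, the statement that $I \colon \N \to \Pos$ presents $\Pos$ as the free cocompletion of $\N$ under sifted colimits. Everything turns on the identification $\Pos(In, \blank) \cong (\blank)^n$ combined with the defining property of a sifted weight; the remaining steps are routine interchanges of colimits, but they must be carried out in the $\Pos$-enriched, weighted-colimit setting rather than borrowed wholesale from the ordinary-categorical theory.
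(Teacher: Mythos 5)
Your proposal is correct, and its geometric core is exactly the paper's: finite posets are canonical reflexive coinserters of pairs in $\N$, and arbitrary posets are filtered colimits of their finite subposets, so $\N$ generates $\Pos$ under these two classes of (sifted) colimits. Where you diverge is in how the categorical bookkeeping is handled. The paper, having verified that these canonical colimits form a density presentation of $I: \N \to \Pos$ in the sense of Kelly, simply invokes Kelly's Theorem~5.29 to get the equivalence of (1)--(4) in one stroke. You instead re-prove the relevant instance of that theorem by hand: the lemma that $\Lan{I}{H \comp I}$ preserves sifted colimits (via the pointwise coend formula and the identification $\Pos(In,\blank) \cong (\blank)^n$, which is precisely where siftedness enters), followed by the counit-comparison argument that propagates invertibility of $\epsilon$ from $\N$ to finite posets to all posets. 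This buys a self-contained proof that makes visible \emph{why} the theorem holds --- the discrete posets $n$ are ``sifted-colimit projectives'' because the representables they corepresent are finite powers --- at the cost of some routine but genuinely enriched colimit interchanges (including the $n=0$ case, where you need sifted weights to preserve the empty product, and the check that the conical filtered colimits defining ``finitary'' behave well enrichedly, which the paper records in a remark). Both arguments are sound; the paper's is shorter but opaque to a reader who does not know density presentations, while yours is longer but elementary.
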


\begin{proof}
	Every poset is a filtered colimit of its finite subposets, each of which is a coinserter as in Remark~\ref{rem:poset-canonical-presentation}. 
	
	Consequently, starting with the subcategory $\N$ we obtain all of $\Pos$ by reflexive coinserters and filtered colimits.
	In the terminology of~\cite{kelly:book} (Theorem~5.29), this states that the embedding $I: \N\to \Pos$ has a codensity presentation formed by filtered colimits and reflexive coinserters.
	By that theorem properties (1)-(4) are equivalent.
	
\end{proof}

\begin{remark}
	In (3) we can substitute reflexive coinserters by canonical coinserters, as is clear from the above proof.
\end{remark}

\begin{remark}
The above theorem is completely analogous to the fact proved in~\cite{adamek+rosicky+vitale:what-are-sifted} for ordinary endofunctors of categories with finite coproducts: preservation of sifted colimits is equivalent to the preservation of filtered colimits and reflexive coequalisers.
\end{remark}

\begin{example}
\label{ex:strongly-finitary-functors}
\phantom{formatting}
\begin{enumerate}
	\item The endofunctor $X \mapsto X^m$ (for $m \in \Nat$) of $\Pos$ is strongly finitary: it clearly preserves filtered colimits, and we verify that it also preserves the canonical coinserters of Remark~\ref{rem:poset-canonical-presentation}.
	Suppose $m = 2$.
	Then a comparable pair in $P \times P$ is a pair $(a,b)$ where the left-hand components of $a$ and $b$ are comparable in $P$, and thus have the form $x_{p_0(i)} \leq x_{p_1(i)}$ for some $i \leq k-1$.
	And the right-hand components have the form $x_{p_0(j)} \leq x_{p_1(j)}$ for some $j \leq k-1$.
	Thus the only comparable pairs of $P \times P$ are $(x_{p_0(i)},x_{p_0(j)}),(x_{p_1(i)},x_{p_1(j)})$.
	We conclude that the canonical coinserter of the poset $P \times P$ is given by $p_0 \times p_0, p_1 \times p_1: k \times k \to n \times n$.
	Analogously for $m > 2$.
	\item Coproducts of strongly finitary endofunctors are strongly finitary.
	Example:
	given a signature $\Sigma$, the corresponding polynomial functor $X \mapsto \coprod_{m \in \Nat} \Sigma_m \times X^m$ is strongly finitary.
	\item (Weighted) colimits of strongly finitary endofunctors are strongly finitary.
	\item A composite of strongly finitary endofunctors is strongly finitary.
\end{enumerate}
\end{example}

\begin{remark}
\label{rem:sf-endofunctor-sf-monad}
Every strongly finitary endofunctor $H$ of $\Pos$ generates a free monad whose underlying functor $\wh{H}$ is also strongly finitary.
Indeed, following~\cite{trnkova+adamek+koubek+reiterman}, $\wh{H}$ is a colimit in $[\Pos,\Pos]$ of the following $\omega$-chain
\[
\begin{tikzcd}
	\Id
	\arrow[r, "w_0"]
	& H + \Id
	\arrow[r, "w_1"]
	& H(H + \Id) + \Id
	\arrow[r, "w_2"]
	& \dots
\end{tikzcd}
\]
That is, the chain $W: \omega \to [\Pos,\Pos]$ has objects
\[
W_0 = \Id \text{ and } W_{n+1} = HW_n + \Id
\]
and morphisms
\[
w_0: \Id \to H + \Id \text{ the coproduct injection}
\]
and
\[
w_{n+1} = Hw_n + \id.
\]
Thus if $H$ is strongly finitary, so is each $W_n$ (by the preceding example).
Consequently, $\wh{H} = \colim W_n$ is strongly finitary.
\end{remark}

\begin{notation}
A monad whose endofunctor is strongly finitary is called a \emph{strongly finitary monad}.
We denote by
\[
\Mndsf(\Pos)
\]
the category of strongly finitary monads and monad morphisms.
\end{notation}

\begin{example}
\label{ex:free-monad-from-endofunctor}
The endofunctor $H_\Sigma$ generates the following free monad $\Tmon_\Sigma$ on $\Pos$: to every poset $X$ (of variables) it assigns the poset $T_\Sigma X$ of $\Sigma$-terms with variables from $X$.
That is, the underlying set is the smallest set containing $X$ and such that for every $\sigma \in \Sigma_n$ and every $n$-tuple $t_i$ in $T_\Sigma X$ we have $\sigma(t_i)$ in $T_\Sigma X$. 
This yields a structure of a $\Sigma$-algebra on $T_\Sigma X$.
The ordering of $T_\Sigma X$ is the smallest one such that $T_\Sigma X$ contains $X$ as a subposet, and all operations are monotone.
It follows from~\ref{rem:sf-endofunctor-sf-monad} that $\Tmon_\Sigma$ is strongly finitary.
\end{example}

\begin{remark}
It follows from Example~\ref{ex:strongly-finitary-functors} and Remark~\ref{rem:sf-endofunctor-sf-monad} that $\Mndsf(\Pos)$ has (weighted) colimits.
Indeed, given a diagram $D$ and a weight, the underlying diagram $D_\ordi$ in $\Endsf(\Pos)$ has a colimit $H$ which is strongly finitary. The free monad $\wh{H}$ is then a colimit of $D$ in $\Mndsf(\Pos)$, and it is strongly finitary.
\end{remark}

\section{From Varieties of Ordered Algebras to Strongly Finitary Monads}
\label{sec:varieties-to-sf-monads}

\begin{notation}
\label{not:sigma-algebra}
Let $\Sigma$ be a signature, i.e., a collection of sets $\Sigma_n$ (of $n$-ary operation symbols) indexed by $n \in \Nat$.
An ordered $\Sigma$-algebra is a poset $A$ together with a monotone map $\sigma_A: A^n \to A$ for every $n \in \Nat$ and $\sigma \in \Sigma_n$.
The category of ordered $\Sigma$-algebras and homomorphisms (i.e., monotone functions preserving the given operations) is denoted by $\Alg(\Sigma)$.
\end{notation}

\begin{remark}
\phantom{formatting}
\begin{enumerate}
	
	\item With $\Sigma$ we associate the \emph{polynomial functor} $H_\Sigma: \Pos \to \Pos$ given on objects by
	\[
	H_\Sigma X = \coprod_{n \in \Nat} \Sigma_n \times X^n
	\]
	and analogously on morphisms.
	By Example~\ref{ex:strongly-finitary-functors} (2), $H_\Sigma$ is strongly finitary.
	
	\item $\Alg(\Sigma)$ is clearly equivalent to the category of algebras for $H_\Sigma$, i.e., pairs $(A,\alpha)$ where $A$ is a poset and $\alpha: H_\Sigma A\to A$ is a monotone function. (Morphisms are monotone maps making the obvious square commutative.)
	
	\item It follows from~\cite{barr} that the category of algebras for an ordinary endofunctor $H$ is equivalent to the category of Eilenberg-Moore algebras for the free monad $\wh{H}$ (see Remark~\ref{rem:sf-endofunctor-sf-monad}).
	The same result holds for enriched endofunctors.
	In particular, we conclude
	\[
	\Alg(\Sigma) \simeq \Pos^{\Tmon_\Sigma}.
	\]
	
	\item The algebra $T_\Sigma X$ of terms (Example~\ref{ex:free-monad-from-endofunctor}) is a free $\Sigma$-algebra on $\eta_X: X \to T_\Sigma X$, the inclusion of variables:
	for every $\Sigma$-algebra $A$ and every monotone function $f: X \to A$ the unique extension to a homomorphism $f^\sharp: T_\Sigma X \to A$ is given by
	\[
	f^\sharp(\sigma(t_i)) = \sigma_A(f^\sharp(t_i)).
	\]	
\end{enumerate}
\end{remark}

\begin{definition}
\label{def:ordered-variety}
Let $V$ be a countably infinite set (of variables), $V = \{ x_n \mid n \in \Nat \}$.
An ordered pair of terms in $T_\Sigma V$ is called an \emph{inequation} and is written as $u \leq v$.
A $\Sigma$-algebra $A$ \emph{satisfies} $u \leq v$ iff every map $f: V \to |A|$ (interpretation of variables) fulfills $f^\sharp(u) \leq f^\sharp(v)$.

By a \emph{variety of ordered $\Sigma$-algebras} we understand a full subcategory of $\Alg(\Sigma)$ specified by a set of inequations.
\end{definition}

\begin{example}
	\phantom{formatting}
	\begin{enumerate}
		\item Ordered monoids are specified by the usual signature $\Sigma = \{ \cdot, e \}$ and the usual equations for monoids.
		The corresponding algebras are monoids with a partial order making the multiplication monotone (in both variables).
		
		This leads to the monad $\Tmon$ on $\Pos$ lifting the word monad on $\Set$ as follows:
		\[
		TX = X^*,
		\]
		the poset of words on $|X|$ ordered pointwise:
		\[
		x_0 x_1 \dots x_{n-1} \leq y_0 y_1 \dots y_{m-1} \; \text{ iff } \; n = m \text{ and } x_i \leq y_i \; \; (i < n).
		\]
		
		\item If we add to the equations above the inequation
		\[
		x \leq x \cdot y
		\]
		we obtain the variety of ordered monoids with $e$ the smallest element.
		That is, the above inequation is equivalent to
		\[
		e \leq y.
		\]
		Indeed the first inequation yields the latter one by putting $x = e$.
		Conversely, from $e \leq y$ we get $x = x \cdot e \leq x \cdot y$.
		
		The corresponding monad is the lifting of the word monad
		\[
		TX = X^*
		\]
		ordered as follows:
		\[
		x_0 x_1 \dots x_{n-1} \leq w \; \text{ iff } \; w=  w_0 w_1 \dots w_{n-1} \text{ and } w_i \text{ contains } y_i \text{ with } x_i \leq y_i \; \; (i < n).
		\]
		
		\item Bounded posets (with a least element $0$ and a largest element $1$) form a variety with $\Sigma$ given by nullary operations $0$,$1$ and the variety is presented by the inequations
		\[
		0 \leq x \text{ and } x \leq 1.
		\]
		This is a lifting of the variety of non-ordered algebras with two nullary operations.
	\end{enumerate}
\end{example}

\begin{remark}
\label{rem:varieties-sur-reflective}
Every variety of $\Sigma$-algebras is a reflective subcategory of $\Alg(\Sigma)$ with surjective reflections.

Indeed, since $H_\Sigma$ is a finitary endofunctor on a locally finitely presentable category, $\Alg(\Sigma) \cong H_\Sigma\text{-}\Alg$ is also locally finitely presentable, see~\cite{adamek+rosicky}, Remark 2.78.
In particular, it is complete and cowellpowered.
The factorisation system (epi, embedding) on $\Pos$ lifts, since $H_\Sigma$ preserves epimorphisms, to $\Alg(\Sigma)$.
Since a variety $\V$ is easily seen to be closed under products and subalgebras carried by embeddings, the surjective reflections follow, see~\cite{adamek+herrlich+strecker}, Theorem~16.8.
\end{remark}

\begin{construction}[see~\cite{bloom}]
\label{con:varietal-free-algebra}
For every variety $\Vvar$ of ordered algebras the free algebra $T_\Vvar X$ of $\Vvar$ on a poset $X$ can be constructed as follows.
	
Let $\Eeq_X$ be the collection of all inequations $s \leq t$ satisfied by all algebras of $\Vvar$, where $s,t \in T_\Sigma X$ are terms in variables from $X$.
Then $\Eeq_X$ is a preorder, i.e., a reflexive and transitive relation on $T_\Sigma X$.
Moreover, it is \emph{admissible} in the sense of Bloom~\cite{bloom}: given an $n$-ary symbol $\sigma \in \Sigma$ and $n$ pairs $s_i \leq t_i$ ($i<n$) in $\Eeq_X$, it follows that the pair $\sigma(s_i) \leq \sigma(t_i)$ also lies in $\Eeq_X$.
Indeed, given an algebra $A \in \Vvar$ and an interpretation $f: X \to |A|$, we know that the homomorphism $f^\sharp: T_\Sigma X \to A$ fulfils $f^\sharp (s_i) \leq f^\sharp (t_i)$ for all $i$, thus
\[
f^\sharp (\sigma(s_i)) = \sigma_{T_\Vvar X}(f^\sharp(s_i)) \leq \sigma_{T_\Vvar X}(f^\sharp(t_i)) = f^\sharp(\sigma(t_i)).
\]
Consequently, for the induced equivalence relation
\[
\Eeq_X^o = \Eeq_X \cap \Eeq_X^{-1}
\]
we obtain a $\Sigma$-algebra $T_\Vvar X$ on the quotient set
\[
|T_\Vvar X| = |T_\Sigma X|/\Eeq_X^o
\]
(of all equivalence classes $[t]$ of terms $t \in T_\Sigma X$).
The operations are as expected:
\[
\sigma_{T_\Vvar X} ([t_0], \dots, [t_{n-1}]) = [\sigma(t_0,\dots,t_{n-1})]
\]
for every $n$-ary $\sigma$ and all $n$-tuples $t_0,\dots,t_{n-1} \in T_\Sigma X$.
Finally, we consider $T_\Vvar X$ as a poset via
\[
[s] \leq [t] \text{ iff } (s,t) \in \Eeq_X.
\]
\end{construction}

The following theorem was stated by Bloom (\cite{bloom}, Theorem~2.2).
We present a full proof since we need it later, and the original proof was only a sketch.

\begin{theorem}
\label{thm:free-ordered-algebra}
The above ordered algebra $T_\Vvar X$ is a free algebra of the variety $\Vvar$ on the poset $X$ w.r.t.\ $\eta_X: X \to T_\Vvar X$ given by $x \mapsto [x]$.
\end{theorem}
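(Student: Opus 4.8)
The plan is to verify directly the universal property of a free algebra: for every algebra $A \in \Vvar$ and every monotone map $f \colon X \to |A|$ there should be a unique homomorphism $g \colon T_\Vvar X \to A$ in $\Vvar$ with $g \comp \eta_X = f$. Two preliminary points must be settled first. That $\eta_X$ is monotone is immediate: if $x \leq x'$ in $X$, then for every $A \in \Vvar$ and every monotone $f \colon X \to |A|$ we have $f^\sharp(x) = f(x) \leq f(x') = f^\sharp(x')$, so the inequation $x \leq x'$ lies in $\Eeq_X$ and hence $[x] \leq [x']$. The genuinely substantial point, which I expect to be the main obstacle, is to show that $T_\Vvar X$ is itself an object of $\Vvar$.

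For membership I would prove that $T_\Vvar X$ satisfies every inequation $u \leq v$ (with $u,v \in T_\Sigma V$) holding throughout $\Vvar$, in particular the defining ones. Fixing such an inequation and an interpretation $h \colon V \to |T_\Vvar X|$, I would choose representatives $s_n \in T_\Sigma X$ with $h(x_n) = [s_n]$, giving a substitution $\rho \colon V \to T_\Sigma X$, $x_n \mapsto s_n$; a short induction on terms (using that the operations of $T_\Vvar X$ act on representatives) then yields $h^\sharp(w) = [\rho^\sharp(w)]$ for every $w \in T_\Sigma V$. Thus the required inequality $h^\sharp(u) \leq h^\sharp(v)$ amounts to $(\rho^\sharp(u), \rho^\sharp(v)) \in \Eeq_X$, i.e.\ to $g^\sharp(\rho^\sharp(u)) \leq g^\sharp(\rho^\sharp(v))$ for every $A \in \Vvar$ and every $g \colon X \to |A|$. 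Here the key tool is the substitution lemma $g^\sharp \comp \rho^\sharp = (g^\sharp \comp \rho)^\sharp$, which holds because both sides are $\Sigma$-homomorphisms $T_\Sigma V \to A$ agreeing on variables, hence coincide by freeness of $T_\Sigma V$. Writing $k = g^\sharp \comp \rho \colon V \to |A|$, the claim reduces to $k^\sharp(u) \leq k^\sharp(v)$, which holds precisely because $A$ satisfies $u \leq v$. This settles $T_\Vvar X \in \Vvar$, and I regard it as the crux of the whole argument.

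With membership in hand, I would define $g \colon T_\Vvar X \to A$ by $g([t]) = f^\sharp(t)$. It is well defined because $[s] = [t]$ means $(s,t),(t,s) \in \Eeq_X$, so $A$ satisfies both $s \leq t$ and $t \leq s$, forcing $f^\sharp(s) = f^\sharp(t)$; it is monotone because $[s] \leq [t]$ means $(s,t) \in \Eeq_X$, whence $f^\sharp(s) \leq f^\sharp(t)$; and it is a $\Sigma$-homomorphism since the operations of $T_\Vvar X$ are computed on representatives while $f^\sharp$ is a homomorphism. Finally $g(\eta_X(x)) = g([x]) = f^\sharp(x) = f(x)$, so $g \comp \eta_X = f$.

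Uniqueness I would obtain from the quotient map $q \colon T_\Sigma X \to T_\Vvar X$, $t \mapsto [t]$, a surjective $\Sigma$-homomorphism. If $g'$ is any homomorphism with $g' \comp \eta_X = f$, then $g' \comp q$ is a $\Sigma$-homomorphism $T_\Sigma X \to A$ extending $f$, so by freeness of $T_\Sigma X$ it equals $f^\sharp = g \comp q$; since every element of $T_\Vvar X$ is some $[t]$, surjectivity of $q$ forces $g' = g$. All the real work lies in the membership argument of the second paragraph; the remaining steps are the standard bookkeeping of quotient term algebras.
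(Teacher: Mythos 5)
Your proof is correct, but it takes a genuinely different route from the paper's. The paper does not verify the universal property by hand: it first invokes Remark~\ref{rem:varieties-sur-reflective} (local finite presentability of $\Alg(\Sigma)$ and the (epi, embedding) factorisation system) to get, abstractly, that $\Vvar$ is reflective in $\Alg(\Sigma)$ with surjective reflections, so a free algebra exists and is a quotient of $T_\Sigma X$ by some admissible preorder $\less$; the whole content of the paper's proof is then the identification $\less \,=\, \Eeq_X$, using that every $f^\sharp$ factors monotonely through the reflection map $e_X$. You instead give the classical Birkhoff/Bloom-style argument: prove directly that $T_\Vvar X$ lies in $\Vvar$ via the substitution lemma $g^\sharp \comp \rho^\sharp = (g^\sharp \comp \rho)^\sharp$, then define the extension on representatives and get uniqueness from surjectivity of the quotient map. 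Your version is self-contained and elementary — it needs neither Remark~\ref{rem:varieties-sur-reflective} nor any accessibility theory, and it makes the membership $T_\Vvar X \in \Vvar$ (which the paper gets for free from reflectivity) explicit; the paper's version is shorter given the cited machinery and simultaneously exhibits $c_\Vvar$ as the surjective reflection of $T_\Sigma X$, which is the form in which the result is reused later. One small point that both arguments leave implicit is the enriched ($\Pos$-) half of the universal property, i.e.\ that $g \mapsto g \comp \eta_X$ reflects the order on hom-posets; in your setup this follows at once from surjectivity of $q$ together with the corresponding property of $T_\Sigma X$, so it is worth a sentence but not a worry.
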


\begin{proof}
	\phantom{formatting}
	\begin{enumerate}
		\item $T_\Vvar X$ is a well-defined ordered $\Sigma$-algebra.
		This follows easily from the fact that $\Eeq_X$ is an admissible preorder.
		
		\item $\Vvar$ has a free algebra on $X$ which is given by an admissible preorder $\less$ on $T_\Sigma X$ (that is, for the induced equivalence relation $\sim$ the underlying poset is $|T_\Vvar X| = |T_\Sigma X|/\sim$ and the operations are induced by those of $T_\Sigma X$).
		This statement follows from Remark~\ref{rem:varieties-sur-reflective}, which implies that a free algebra $T_\Vvar X$ exists, and the unique homomorphism
		\[
		e_X: T_\Sigma X \to T_\Vvar X
		\]
		extending the universal arrow is epic.
		Indeed, the desired preorder is simply
		\[
		s \less t \text{ iff } e_X(s) \leq e_X(t).
		\]
		
		\item The preorder $\Eeq_X$ of the above construction coincides with $\less$ of (2).
		Indeed, if $(s,t) \in \Eeq_X$, then the algebra $T_\Vvar X$ satisfies $s \leq t$ (since it lies in $\Vvar$) and taking the universal map $(\eta_\Vvar)_X: X \to T_\Vvar X$ as the interpretation, we have
		\[
		e_X = (\eta_\Vvar)_X^\sharp
		\]
		(because $e_X$ is a $\Sigma$-homomorphism).
		Since $e_X(s) \leq e_X(t)$, we conclude that $s \less t$.
		
		Conversely, if $s \less t$, which means $e_X(s) \leq e_X(t)$, we verify that every algebra $A \in \Vvar$ satisfies $s \leq t$.
		Let $f: X \to A$ be an interpretation, then the corresponding homomorphism $f^\sharp: T_\Sigma X \to A$ factorises through the reflection of $T_\Sigma X$ in $\Vvar$ in $\Alg(\Sigma)$:
		\[
		\begin{tikzcd}
			T_\Sigma X
			\arrow[r, "e_X"]
			\arrow[rd, "f^\sharp", swap]
			&
			T_\Vvar X
			\arrow[d, "h"]
			\\
			&
			A
		\end{tikzcd}
		\]
		Since $h$ is monotone, the inequality $e_X(s) \leq e_X(t)$ implies $f^\sharp(s) \leq f^\sharp(t)$, as required.	
	\end{enumerate}
\end{proof}

\begin{notation}
\label{not:variety-monad-morphism}
For every variety $\Vvar$ of $\Sigma$-algebras we denote by
\[
c_\Vvar: \Tmon_\Sigma \to \Tmon_\Vvar
\]
the monad morphism whose components are the canonical quotient maps
\[
|T_\Sigma X| \to |T_\Sigma X|/\Eeq_X^o.
\]
\end{notation}

\begin{lemma}
\label{lem:varietal-forgetful-monadic}
For every variety $\Vvar$ the forgetful functor to $\Pos$ is strictly monadic: the comparison functor $K: \Vvar \to \Pos^{\Tmon_\Vvar}$ is an isomorphism.
\end{lemma}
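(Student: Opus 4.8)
The plan is to exhibit an explicit inverse to $K$ built from the monad morphism $c_\Vvar$ of Notation~\ref{not:variety-monad-morphism}. Recall the isomorphism $\Alg(\Sigma) \simeq \Pos^{\Tmon_\Sigma}$. Since $c_\Vvar \colon \Tmon_\Sigma \to \Tmon_\Vvar$ is a morphism of monads, it induces a restriction functor
\[
G \colon \Pos^{\Tmon_\Vvar} \to \Pos^{\Tmon_\Sigma} \simeq \Alg(\Sigma), \qquad (A,a) \mapsto (A, a \comp e_A),
\]
where $e_A = (c_\Vvar)_A$ is the quotient $T_\Sigma A \to T_\Vvar A$, and on morphisms $G$ is the identity on underlying monotone maps. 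I would prove that $G$ is injective on objects, fully faithful, and has image exactly the full subcategory $\Vvar \subseteq \Alg(\Sigma)$. Corestricting $G$ then yields an isomorphism $G' \colon \Pos^{\Tmon_\Vvar} \to \Vvar$, and a short check that $G' \comp K = \Id_\Vvar$ forces $K = (G')^{-1}$, proving $K$ an isomorphism.

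Injectivity on objects and full faithfulness are formal, resting on the fact (Theorem~\ref{thm:free-ordered-algebra}(2)) that each $e_X$ is epic, together with the naturality $T_\Vvar g \comp e_A = e_B \comp T_\Sigma g$ of $c_\Vvar$. If $a \comp e_A = a' \comp e_A$ then $a = a'$ since $e_A$ is epic, giving injectivity on objects. For fullness, a $\Tmon_\Sigma$-homomorphism $g \colon (A, a \comp e_A) \to (B, b \comp e_B)$ satisfies $g \comp a \comp e_A = b \comp T_\Vvar g \comp e_A$, and cancelling the epimorphism $e_A$ shows $g$ is already a $\Tmon_\Vvar$-homomorphism; faithfulness is immediate as $G$ does not alter underlying maps. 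Because the hom-posets of both Eilenberg--Moore categories are subposets of the corresponding $\Pos(A,B)$ with the inherited order, this bijection on hom-sets is automatically an order-isomorphism, so $G$ is fully faithful in the enriched sense.

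The main obstacle is identifying the image of $G$ with $\Vvar$, i.e.\ proving that a $\Sigma$-algebra $(A,b)$ lies in $\Vvar$ precisely when its structure map $b = (\id_A)^\sharp$ factors through $e_A$ by a $\Tmon_\Vvar$-algebra. For ``image $\subseteq \Vvar$'': given $(A,a)$, monotonicity of $a$ and the definition of the order on $T_\Vvar A$ (Construction~\ref{con:varietal-free-algebra}) force $(s,t) \in \Eeq_A \Rightarrow a \comp e_A(s) \leq a \comp e_A(t)$; applying this to the instantiations $f^{*}(u) \leq f^{*}(v)$ of a defining inequation $u \leq v$ under an arbitrary interpretation $f \colon V \to |A|$, together with the substitution identity $(\id_A)^\sharp \comp f^{*} = f^\sharp$, yields $f^\sharp(u) \leq f^\sharp(v)$, so $A$ satisfies all defining inequations and $A \in \Vvar$. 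For ``$\Vvar \subseteq$ image'': every $A \in \Vvar$ equals $G(KA)$, because the underlying map of the counit $\epsilon_A \colon T_\Vvar A \to A$ precomposed with $e_A$ is the unique $\Sigma$-homomorphism extending $\id_A$, namely the original structure map of $A$; this simultaneously gives $G' \comp K = \Id_\Vvar$. This last translation between \emph{satisfaction of inequations} and \emph{factorisation through $e_A$} --- routed through the defining property of $\Eeq_A$ and the substitution lemma --- is the delicate point; everything else is diagram chasing with the epimorphisms $e_X$.
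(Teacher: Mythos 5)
Your proof is correct, but it follows a genuinely different route from the paper's. The paper settles the lemma by citation: it points to Mac~Lane's Theorem~VI.8.1 (strict monadicity of classical varieties over $\Set$) and asserts that the argument transfers once one equation in that proof is weakened to an inequation. You instead construct the inverse of $K$ explicitly: restriction along the quotient monad morphism $c_\Vvar$ gives $G\colon \Pos^{\Tmon_\Vvar} \to \Pos^{\Tmon_\Sigma} \cong \Alg(\Sigma)$, which is injective on objects and fully faithful because each component $e_A$ of $c_\Vvar$ is epic, and whose image you identify with $\Vvar$. The delicate step is handled correctly: instances $f^{*}(u)\leq f^{*}(v)$ of the defining inequations lie in $\Eeq_A$ (satisfaction is stable under substitution, via $g^\sharp\comp f^{*}=(g\comp f)^\sharp$), hence become comparable in $T_\Vvar A$, and monotonicity of the $\Tmon_\Vvar$-structure map then yields $f^\sharp(u)\leq f^\sharp(v)$; conversely $G(KA)$ recovers the original structure of $A\in\Vvar$ because $\epsilon_A\comp e_A$ is the unique $\Sigma$-homomorphic extension of $\id_A$. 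One point you should make explicit: for the \emph{strictness} conclusion you need $\Alg(\Sigma)\simeq\Pos^{\Tmon_\Sigma}$ to be a concrete isomorphism (identity on underlying posets and maps), not merely an equivalence; this is true and standard, but the paper only records an equivalence, so a word of justification is in order. As for what each approach buys: the paper's reference is economical but asks the reader to re-run a $\Set$-based proof in the $\Pos$-enriched setting and verify the order-theoretic details themselves; your argument is self-contained within the machinery already developed (the admissible preorders $\Eeq_X$, the surjective reflections of Remark~\ref{rem:varieties-sur-reflective}, Theorem~\ref{thm:free-ordered-algebra}), keeps the underlying posets fixed throughout so that strictness is visible at every step, and in passing establishes for $\Tmon=\Tmon_\Vvar$ the full embedding of $\Pos^{\Tmon}$ into $\Alg(\Sigma)$ that Section~\ref{sec:sf-monads-to-varieties} later proves for arbitrary strongly finitary monads.
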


\begin{proof}
For classical varieties see~\cite{maclane:cwm}, Theorem VI.8.1.
The proof for varieties of ordered algebras is completely analogous, one just replaces the equation $\lambda_B = \mu_B$ with the inequation $\lambda_B \leq \mu)B$.
\end{proof}

\begin{remark}[See~\cite{kelly:book}]
\label{rem:spitze-klammern}
\phantom{formatting}
\begin{enumerate}
	\item Recall the \emph{continuation monad} $\spitze{A,A}$ on $\Pos$ associated with every poset $A$: to a poset $X$ it assigns the power of $A$ to the set $\Pos_\ordi (X,A)$ of all monotone maps $f: X \to A$:
	\[
	\spitze{A,A}X = \prod_{\Pos_\ordi (X,A)} A.
	\]
	Denote by $\pi_f: \spitze{A,A} X \to A$ the projection corresponding to $f: X \to A$.
	To every morphism $h: X \to Y$ the monad assigns the morphism $\spitze{A,A}h$ determined by the following commutative triangles:
	\[
	\begin{tikzcd}
		\prod_{\Pos_\ordi (X,A)} A
		\arrow[rr, "\spitze{A,A}h"]
		\arrow[dr, "\pi_{f \comp h}", swap]
		&
		&
		\prod_{\Pos_\ordi (Y,A)} A,
		\arrow[dl, "\pi_f"]
		\\
		&
		A
		&
	\end{tikzcd}
	\qquad f \in \Pos_\ordi(Y,A).
	\]
	The unit is $\langle f \rangle_{f \in \Pos_\ordi (X,A)}: X \to \spitze{A,A} X$, and the multiplication $\mu_X$ is determined by the following commutative triangles:
	\[
	\begin{tikzcd}
		\prod_{\Pos_\ordi (\spitze{A,A}X,A)} A
		\arrow[rr, "\mu_X"]
		\arrow[dr, "\pi_{\pi_f}", swap]
		&
		&
		\prod_{\Pos_\ordi (X,A)} A,
		\arrow[dl, "\pi_f"]
		\\
		&
		A
		&
	\end{tikzcd}
	\qquad f \in \Pos_\ordi(X,A).
	\]
	
	\item It follows from~\cite{dubuc:kan-extensions} that for every monad $\Tmon$ and every poset $A$ there is a bijection between monad morphisms $\Tmon \to \spitze{A,A}$ and algebras of $\Pos^\Tmon$ on $A$.
	This bijection assigns to an algebra $\alpha: TA \to A$ the monad morphism
	\[
	\wh{\alpha}: \Tmon \to \spitze{A,A}
	\]
	with components determined by the following commutative squares:
	\[
	\begin{tikzcd}
		TX
		\arrow[r, "\wh{\alpha}_X"]
		\arrow[d, "Tf", swap]
		&
		\spitze{A,A} X
		\arrow[d, "\pi_f"]
		\\
		TA
		\arrow[r, "\alpha", swap]
		&
		A
	\end{tikzcd}
	\qquad f \in \Pos_\ordi (X,A).
	\]
	Thus if $\Tmon = \Tmon_\Sigma$, then $\wh{\alpha}_X$ assigns to a term $t \in T_\Sigma X$ the tuple $(f^\sharp(t))_{f:X \to A}$.
	
	\item Let $b: \Smon \to \Tmon$ be a monad morphism.
	Every algebra $(A,\alpha)$ in $\Pos^\Tmon$ then yields an algebra $(A, \alpha \comp b_A)$ in $\Pos^\Smon$.
	The following triangle
	\[
	\begin{tikzcd}
		\Smon
		\arrow[rr, "b"]
		\arrow[rd, "\wh{\alpha \comp b_A}", swap]
		&
		&
		\Tmon
		\arrow[dl, "\wh{\alpha}"]
		\\
		&
		\spitze{A,A}
		&
	\end{tikzcd}
	\]
	commutes.
	Indeed, for every poset $X$ and every $f \in \Pos_\ordi (X,A)$ we have
	\[
	\pi_f(\wh{\alpha}_X \comp b_X) = \alpha \comp Tf \comp b_X = \alpha \comp b_A \comp Sf.
	\]
	The same result is obtained by
	\[
	\pi_f (\wh{\alpha \comp b_A}_X) = \alpha \comp b_A \comp Sf.
	\]
	
	\item In particular, let $\Tmon = \Tmon_\Sigma$ for a signature $\Sigma$.
	Given a term $u$ in $T_\Sigma n$, it corresponds to a monad morphism
	\[
	\wt{u}: \Tmon_{\Omega_n} \to \Tmon_\Sigma
	\]
	where $\Omega_n$ is a signature of a single operation $\omega$ of arity $n$.
	Its component $\wt{u}_X: T_{\Omega_n} X \to T_\Sigma X$ assigns to a term $t$ over $X$ (containing the unique operation symbol $\omega$) the $\Sigma$-term obtained by replacing each $\omega$ by the term $u$.
	Thus if a $\Sigma$-algebra $(A,\alpha)$ satisfies an inequation $u_0 \leq u_1$, the inequation $(\wh{\alpha} \comp \wt{u}_0)_X \leq (\wh{\alpha} \comp \wt{u}_1)_X$ holds for all posets $X$.
	Shortly: $\wh{\alpha} \comp \wt{u}_0 \leq \wh{\alpha} \comp \wt{u}_1$.
\end{enumerate}
\end{remark}

\begin{example}
We describe the free-algebra monad of the variety given by a single inequation $u_0 \leq u_1$ in signature $\Sigma$.
Let $u_0,u_1$ be terms with variables $x_0,\dots, x_{n-1}$.
For the signature $\Omega_n$ of a single operation of arity $n$ they can be viewed (via Yoneda lemma) as natural transformations 
\[
u_0,u_1: H_{\Omega_n} \to T_\Sigma.
\]
The corresponding monad morphisms
\[
\wt{u}_0,\wt{u}_1: \Tmon_{\Omega_n} \to \Tmon_\Sigma.
\]
have, in the category of strongly finitary monads, a coinserter we denote as follows:
\[
\begin{tikzcd}
	{\Tmon_{\Omega_n}}
	\arrow[r, "\wt{u}_1", bend left]
	\arrow[r, "\wt{u}_0", swap, bend right]
	&
	{\Tmon_\Sigma}
	\arrow[r, "c"]
	&
	\Tmon
\end{tikzcd}
\]
\end{example}

We verify that this is precisely $c_\Vvar$ above for the variety presented by $u_0 \leq u_1$.

\begin{proposition}
\label{prop:free-algebra-monad}
The above monad $\Tmon$ is the free-algebra monad of the variety presented by the inequation $u_0 \leq u_1$.
\end{proposition}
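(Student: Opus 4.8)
The plan is to show that $c_\Vvar\colon \Tmon_\Sigma \to \Tmon_\Vvar$ already \emph{is} a coinserter of $\wt{u}_0,\wt{u}_1$ in $\Mndsf(\Pos)$: I verify the two clauses of the coinserter universal property for it. Since $\Tmon_\Vvar$ is strongly finitary (so that it is an object of $\Mndsf(\Pos)$) and coinserters are unique up to isomorphism, this identifies $\Tmon$ with $\Tmon_\Vvar$ and $c$ with $c_\Vvar$, which is exactly the assertion.

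First I would check the cocone inequality $c_\Vvar \comp \wt{u}_0 \leq c_\Vvar \comp \wt{u}_1$. Fixing a poset $X$ and a term $t \in T_{\Omega_n}X$, this amounts to $\bigl((\wt{u}_0)_X(t),(\wt{u}_1)_X(t)\bigr) \in \Eeq_X$, i.e.\ to every $A \in \Vvar$ satisfying the inequation $(\wt{u}_0)_X(t) \leq (\wt{u}_1)_X(t)$. But $A$ satisfies $u_0 \leq u_1$, so by Remark~\ref{rem:spitze-klammern}(4) we have $\wh{\alpha} \comp \wt{u}_0 \leq \wh{\alpha} \comp \wt{u}_1$; reading this at the component $X$ and composing with each projection $\pi_f$ ($f\colon X \to A$) yields $f^\sharp\bigl((\wt{u}_0)_X(t)\bigr) \leq f^\sharp\bigl((\wt{u}_1)_X(t)\bigr)$ for every interpretation $f$, which is precisely the required satisfaction. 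As $A$ was arbitrary the pair lies in $\Eeq_X$, so $(c_\Vvar)_X$ sends the first term below the second.

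The heart of the proof is universality: given a strongly finitary monad $\Smon$ and a monad morphism $b\colon \Tmon_\Sigma \to \Smon$ with $b \comp \wt{u}_0 \leq b \comp \wt{u}_1$, I must produce a unique $\bar b\colon \Tmon_\Vvar \to \Smon$ with $\bar b \comp c_\Vvar = b$, monotone in $b$. The route I would take is through Eilenberg--Moore categories. The morphism $b$ induces the algebraic functor $\Pos^\Smon \to \Pos^{\Tmon_\Sigma} \cong \Alg(\Sigma)$ over $\Pos$, and I claim it lands in the full subcategory $\Vvar$. It suffices to check this on the free $\Smon$-algebras $(SY,\mu_Y)$, since every $\Smon$-algebra is a surjective homomorphic image of a free one (its structure map is split by the unit) and $\Vvar$ is closed under surjective $\Sigma$-homomorphic images. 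For a free algebra, its classifying morphism into $\spitze{SY,SY}$ is $\wh{\mu_Y} \comp b$ by Remark~\ref{rem:spitze-klammern}(3); post-composing the hypothesis $b \comp \wt{u}_0 \leq b \comp \wt{u}_1$ with $\wh{\mu_Y}$ (composition being monotone) gives $\wh{\mu_Y}\comp b \comp \wt{u}_0 \leq \wh{\mu_Y}\comp b \comp \wt{u}_1$, so by Remark~\ref{rem:spitze-klammern}(4) the algebra satisfies $u_0 \leq u_1$ and hence lies in $\Vvar$.

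Consequently the algebraic functor factors through the full embedding $\Pos^{\Tmon_\Vvar} \cong \Vvar \hookrightarrow \Alg(\Sigma) \cong \Pos^{\Tmon_\Sigma}$ induced by $c_\Vvar$. Under the $\Pos$-enriched correspondence between monad morphisms and concrete functors over $\Pos$, this factorisation is $\bar b^{\,*}$ for a unique monad morphism $\bar b\colon \Tmon_\Vvar \to \Smon$, and the equation of functors translates into $\bar b \comp c_\Vvar = b$; uniqueness of $\bar b$ follows because the embedding induced by $c_\Vvar$ is faithful. Monotonicity of $b \mapsto \bar b$ is then automatic: each component $(c_\Vvar)_X$ is surjective, so $\bar b_X \comp (c_\Vvar)_X \leq \bar b'_X \comp (c_\Vvar)_X$ forces $\bar b_X \leq \bar b'_X$. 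I expect the main obstacle to be exactly this universality step — pinning down that the algebraic functor lands in $\Vvar$ and that the monad-morphism/concrete-functor passage is order-preserving — together with the bookkeeping needed to know that $\Tmon_\Vvar$ is strongly finitary, so that the coinserter characterised above is indeed the one formed in $\Mndsf(\Pos)$.
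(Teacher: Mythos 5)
Your proposal is correct, and its overall skeleton coincides with the paper's: both prove the proposition by verifying that $c_\Vvar$ itself satisfies the universal property of the coinserter of $\wt{u}_0,\wt{u}_1$ in $\Mndsf(\Pos)$, and both dispose of the two-dimensional clause by surjectivity of the components of $c_\Vvar$. Where you genuinely diverge is in the construction of the mediating morphism $\ol{b}$. The paper works componentwise: for each poset $X$ it equips $SX$ with the $\Sigma$-structure $\mu^S_X\comp b_{SX}$, checks via Remark~\ref{rem:spitze-klammern}(4) that this algebra lies in $\Vvar$, obtains $\ol{b}_X$ from freeness of $T_\Vvar X$, and then verifies by hand --- in four separate steps --- that the family $(\ol{b}_X)$ is natural, compatible with units and multiplications, and factorises $b$. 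You instead pass to Eilenberg--Moore categories: you show the algebraic functor $H_b\colon\Pos^\Smon\to\Pos^{\Tmon_\Sigma}\cong\Alg(\Sigma)$ lands in $\Vvar$ (reducing to free $\Smon$-algebras via closure of $\Vvar$ under surjective homomorphic images, a standard fact for inequationally defined classes that you assert but could record explicitly), and then invoke the bijection between monad morphisms and concrete functors (the paper's own Theorem~\ref{thm:kv}, step (2), citing Barr--Wells) to extract $\ol{b}$ with $\ol{b}\comp c_\Vvar=b$ in one stroke. Your route buys a substantial saving: all the naturality and monad-law verifications are absorbed into the already-established monad-morphism/concrete-functor duality; the cost is reliance on that duality and on the closure property of $\Vvar$, whereas the paper's argument is self-contained and only uses freeness. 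The one point you rightly flag --- that $\Tmon_\Vvar$ must be known to be strongly finitary for the coinserter computed in $\Mndsf(\Pos)$ to be $\Tmon_\Vvar$ on the nose --- is glossed over in the paper as well (it is only settled in Corollary~\ref{cor:variety-monad-sf}), so this is not a defect specific to your argument.
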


\begin{proof}
The variety $\Vvar$ presented by $u_0 \leq u_1$ yields a free-algebra monad $\Tmon_\Vvar$.
The proposition will be proved by verifying that $c_\Vvar$ (Notation~\ref{not:variety-monad-morphism}) is a coinserter of $\wt{u}_0$,$\wt{u}_1$ in $\Mndsf(\Pos)$.
From the definition of $c_\Vvar$ we conclude
\[
c_\Vvar \comp \wt{u}_0 \leq c_\Vvar \comp \wt{u}_1.
\]

\begin{enumerate}[(a)]
	
	\item
	
	Given a strongly finitary monad $\Smon = (S,\mu^S,\eta^S)$ and a monad morphism $b: \Tmon_\Sigma \to \Smon$ with
	\[
	b \comp \wt{u}_0 \leq b \comp \wt{u}_1,
	\]
	we prove that $b$ factorises through $c_\Vvar$ via a monad morphism.
	
	For every poset $X$, the free algebra $(SX, \mu^S_X)$ for $\Smon$ yields, since $b$ is a monad morphism, the following algebra for $\Tmon_\Sigma$ on $SX$:
	\[
	\beta_X := T_\Sigma SX \xrightarrow{b_{SX}} SSX \xrightarrow{\mu^S_X} SX
	\]
	From $\alpha_X \comp (\wt{u}_0)_X \leq \alpha_X \comp (\wt{u}_1)_X$ we deduce, using Remark~\ref{rem:spitze-klammern}~(4), that the $\Sigma$-algebra $(SX, \beta_X)$ satisfies the inequality $u_0 \leq u_1$.
	Since the free algebra $(TX, \mu^T_X)$ of $\Vvar$ on $X$ corresponds to the $\Sigma$-algebra
	\[
	T_\Sigma TX \xrightarrow{(c_\Vvar)_{TX}} TTX \xrightarrow{\mu^T_X} TX,
	\]
	we obtain a unique $\Sigma$-homomorphism $\ol{b}_X$ with $\ol{b}_X \comp \eta^T_X = \eta^S_X$:
	\[
	\begin{tikzcd}
		T_\Sigma TX
		\arrow[r, "(c_\Vvar)_{TX}"]
		\arrow[d, "T_\Sigma \ol{b}_X", swap]
		&
		TTX
		\arrow[r, "\mu^T_X"]
		&
		TX
		\arrow[d, "\ol{b}_X"]
		&
		X
		\arrow[l, "\eta^T_X", swap]
		\arrow[ld, "\eta^S_X"]
		\\
		T_\Sigma S X
		\arrow[r, "b_{SX}", swap]
		&
		SSX
		\arrow[r, "\mu^S_X", swap]
		&
		SX
		&
	\end{tikzcd}
	\]
	We verify that these morphisms $\ol{b}_X$ form a monad morphism
	\[
	\ol{b}: \Tmon \to \Smon \text{ with } b = \ol{b} \comp c_\Vvar.
	\]
	\begin{enumerate}[(1)]
		\item The equality $b_X = \ol{b}_X \comp (c_\Vvar)_X: T_\Sigma X \to SX$ holds because both sides are homomorphisms of $\Sigma$-algebras and we have
		\[
		b_X \comp \eta^\Sigma_X = \eta^S_X = \ol{b}_X \comp \eta^T_X = \ol{b}_X \comp (c_\Vvar)_X \comp \eta^\Sigma_X.  
		\]
		
		\item $\ol{b}_X$ is natural in $X$.
		In fact, every morphism $f: X \to Y$ yields a $\Sigma$-homomorphism
		\[
		Tf: (TX, \mu^T_X \comp (c_\Vvar)_{TX}) \to (TY, \mu^T_Y \comp (c_\Vvar)_{TY})
		\]
		Thus, $\ol{b}_Y \comp Tf$ is also a $\Sigma$-homomorphism, and so is $Sf \comp \ol{b}_X: (TX, \mu^T_X \comp (c_\Vvar)_{TX}) \to (SY, \alpha_Y)$.
		Since the domain of both composites is a free algebra of $\Vvar$ on $X$, for proving that they are equal we just need to verify
		\[
		\ol{b}_Y \comp Tf \comp \eta^T_X = Sf \comp \ol{b}_X \comp \eta^T_X.
		\]
		See the following diagram:
		\[
		\begin{tikzcd}
			TX
			\arrow[rr, "\ol{b}_X"]
			\arrow[ddd, "Tf", swap]
			&
			&
			SX
			\arrow[ddd, "Sf"]
			\\
			&
			X
			\arrow[ul, "\eta^T_X", swap]
			\arrow[ur, "\eta^S_X"]
			\arrow[d, "f", swap]
			&
			\\
			&
			Y
			\arrow[dl, "\eta^T_Y", swap]
			\arrow[dr, "\eta^S_Y"]
			&
			\\
			TY
			\arrow[rr, "\ol{b}_Y", swap]
			&
			&
			SY
		\end{tikzcd}
		\]
		
		\item The equality
		\[
		\ol{b} \comp \eta^T = \eta^S
		\]
		follows from the right-hand triangle in the diagram defining $\ol{b}_X$ above.
		
		\item We finally prove
		\[
		\ol{b} \comp \mu^T = \mu^S \comp S\ol{b} \comp \ol{b}T.
		\]
		Consider the following diagram
		\[
		\begin{tikzcd}
			T_\Sigma T X
			\arrow[r, "(c_\Vvar)_{TX}"]
			\arrow[rd, swap, "b_{TX}"]
			\arrow[dd, swap, "T_\Sigma \ol{b}_X"]
			&
			TTX
			\arrow[r, "\mu^T_X"]
			\arrow[d, "\ol{b}_{TX}"]
			&
			TX
			\arrow[dd, "\ol{b}_X"]
			\\
			&
			STX
			\arrow[d, "S \ol{b}_X"]
			&
			\\
			T_\Sigma S X
			\arrow[r, swap, "b_{SX}"]
			&
			SSX
			\arrow[r, swap, "\mu^S_X"]
			&
			SX
		\end{tikzcd}
		\]
		The outward rectangle is the definition of $\ol{b}_X$.
		The left-hand parts commute by (1) and (2).
		Consequently, the desired right-hand square commutes since it does when precomposed by the epimorphism $(c_\Vvar)_{TX}$.
		
	\end{enumerate}

	\item Finally for every monad morphism $b': \Tmon_\Sigma \to \Smon$ factorised as $b' = \ol{b'} \comp c_\Vvar$ we are to verify that
	\[
	b \leq b' \text{ implies } \ol{b} \leq \ol{b'}.
	\]
	This is trivial since the components of $c_\Vvar$ are surjective.
	 
\end{enumerate}

\end{proof}

\begin{construction}
\label{con:monad-coinserter}
The above proposition immediately generalises to sets of inequations.
For every variety $\Vvar$ of $\Sigma$-algebras the free-algebra monad $\Tmon_\Vvar$ is a canonical quotient $c_\Vvar: \Tmon_\Sigma \to \Tmon_\Vvar$ of the free-$\Sigma$-algebra monad, see Notation~\ref{not:variety-monad-morphism}.
We construct monad morphisms $\wt{u}_0, \wt{u}_1: \Tmon_\Omega \to \Tmon_\Sigma$ for some signature $\Omega$ forming a coinserter in $\Mndsf(\Pos)$ as follows:
\[
\begin{tikzcd}
	{\Tmon_{\Omega}}
	\arrow[r, "\wt{u}_1", bend left]
	\arrow[r, "\wt{u}_0", swap, bend right]
	&
	{\Tmon_\Sigma}
	\arrow[r, "c_\Vvar"]
	&
	\Tmon_\Vvar
\end{tikzcd}
\]
Given a collection
\[
u_0^i \leq u_1^i, \qquad i \in I
\]
of inequations specifying the variety $\Vvar$, let $n_i$ be the number of variables on both sides.
We define a signature $\Omega = \{ \gamma_i \}_{i \in I}$, where $\gamma_i$ has arity $n_i$.
By Yoneda lemma we obtain natural transformations $u_0,u_1: H_{\Omega} \to T_\Sigma$, since we have $H_\Omega \cong \coprod_{i\ in I} \Pos(n_i,\blank)$.
Let $\wt{u}_0,\wt{u}_1: \Tmon_{\Omega} \to \Tmon_\Sigma$ be the corresponding monad morphisms.
In the category $\Mndsf(\Pos)$ we form a coinserter
\[
\begin{tikzcd}
	{\Tmon_{\Omega}}
	\arrow[r, "\wt{u}_1", bend left]
	\arrow[r, "\wt{u}_0", swap, bend right]
	&
	{\Tmon_\Sigma}
	\arrow[r, "c"]
	&
	\Tmon
\end{tikzcd}
\]
\end{construction}

\begin{proposition}
For every variety $\Vvar$ of ordered algebras the above monad $\Tmon$ is the corresponding free-algebra monad $\Tmon_\Vvar$.
\end{proposition}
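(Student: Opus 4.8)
The plan is to show that the canonical quotient $c_\Vvar: \Tmon_\Sigma \to \Tmon_\Vvar$ of Notation~\ref{not:variety-monad-morphism} is itself a coinserter of the pair $\wt{u}_0, \wt{u}_1$ in $\Mndsf(\Pos)$. Since coinserters are unique up to isomorphism, this identifies $\Tmon$ with $\Tmon_\Vvar$ compatibly with $c$ and $c_\Vvar$, which is exactly the assertion. The argument is a verbatim repetition of the proof of Proposition~\ref{prop:free-algebra-monad}, the only genuinely new point being the translation of the single monad inequality $b \comp \wt{u}_0 \leq b \comp \wt{u}_1$ into the simultaneous satisfaction of the whole family $u_0^i \leq u_1^i$, $i \in I$.

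First I would record the inequality $c_\Vvar \comp \wt{u}_0 \leq c_\Vvar \comp \wt{u}_1$: this holds because $T_\Vvar X$ lies in $\Vvar$ and hence satisfies every defining inequation $u_0^i \leq u_1^i$, which by Remark~\ref{rem:spitze-klammern}~(4) is precisely the stated inequality of monad morphisms. The heart of the argument is then the universal property. Given a strongly finitary monad $\Smon$ and a monad morphism $b: \Tmon_\Sigma \to \Smon$ with $b \comp \wt{u}_0 \leq b \comp \wt{u}_1$, I would decompose this inequality along the coproduct $H_\Omega \cong \coprod_{i \in I} \Pos(n_i,\blank)$. Since $\Tmon_\Omega$ is the free monad on $H_\Omega$, monad morphisms out of it correspond to natural transformations out of $H_\Omega$, and these split over the coproduct; thus the single inequality $b \comp \wt{u}_0 \leq b \comp \wt{u}_1$ is equivalent to the family obtained by restricting to each summand $\Pos(n_i,\blank)$. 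By Remark~\ref{rem:spitze-klammern}~(4), applied exactly as in the single-inequation case, each of these says that the $\Sigma$-algebra $(SX, \beta_X)$, where $\beta_X = \mu^S_X \comp b_{SX}$, satisfies $u_0^i \leq u_1^i$.

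Consequently $(SX, \beta_X)$ lies in $\Vvar$, and I can define $\ol{b}_X$ exactly as in Proposition~\ref{prop:free-algebra-monad} using the freeness of $T_\Vvar X$ on $X$, then check verbatim that the $\ol{b}_X$ assemble into a monad morphism $\ol{b}: \Tmon_\Vvar \to \Smon$ with $b = \ol{b} \comp c_\Vvar$; steps (1)--(4) of that proof carry over unchanged, as do the supporting diagrams. Finally, monotonicity of the assignment $b \mapsto \ol{b}$ is immediate since the components of $c_\Vvar$ are surjective.

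I expect the only delicate step to be the coproduct decomposition of the monad inequality: one must verify that the order on monad morphisms $\Tmon_\Omega \to \Smon$, computed pointwise, genuinely reduces to the separate summands indexed by $i \in I$, so that a \emph{single} inequality of monad morphisms faithfully encodes \emph{all} the defining inequations at once. Once this bookkeeping is in place, every remaining verification is identical to the single-inequation proposition and requires no new ideas.
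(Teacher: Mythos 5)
Your proposal is correct and matches the paper exactly: the paper gives no separate argument here, stating only that the proof is completely analogous to Proposition~\ref{prop:free-algebra-monad}, which is precisely the strategy you follow. You have moreover correctly isolated and justified the one genuinely new ingredient, namely splitting the single monad inequality $b \comp \wt{u}_0 \leq b \comp \wt{u}_1$ over the coproduct decomposition $H_\Omega \cong \coprod_{i \in I} \Pos(n_i,\blank)$ to recover satisfaction of each $u_0^i \leq u_1^i$ separately.
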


The proof is completely analogous to Proposition~\ref{prop:free-algebra-monad}.

\begin{corollary}
\label{cor:variety-monad-sf}
The free-algebra monad $\Tmon_\Vvar$ of a variety of ordered algebras is strongly finitary.
It follows from the above proposition that we have a coinserter
\[
\begin{tikzcd}
	{T_{\Omega}}
	\arrow[r, "\wt{u}_1", bend left]
	\arrow[r, "\wt{u}_0", swap, bend right]
	&
	{T_\Sigma}
	\arrow[r, "c_\Vvar"]
	&
	\Tmon_\Vvar
\end{tikzcd}
\]
in $[\Pos,\Pos]$.
Hence, $\Tmon_\Vvar$ is strongly finitary by Examples~\ref{ex:strongly-finitary-functors}~(3) and~\ref{ex:free-monad-from-endofunctor}.
\end{corollary}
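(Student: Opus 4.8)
The plan is to realise the underlying endofunctor $T_\Vvar$ as a \emph{pointwise} reflexive coinserter of strongly finitary endofunctors, and then to quote Example~\ref{ex:strongly-finitary-functors}~(3). By the preceding Proposition together with Construction~\ref{con:monad-coinserter} I may take for granted that $c_\Vvar: \Tmon_\Sigma \to \Tmon_\Vvar$ is the coinserter of $\wt{u}_0,\wt{u}_1: \Tmon_\Omega \to \Tmon_\Sigma$ in $\Mndsf(\Pos)$. The whole difficulty is therefore to move this colimit out of the category of monads and into $[\Pos,\Pos]$, where coinserters are computed objectwise.

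First I would replace the pair $\wt{u}_0,\wt{u}_1$ by a \emph{reflexive} pair with the same coinserter. Forming the coproduct monad $\Tmon_\Sigma + \Tmon_\Omega$ and the two morphisms $[\,\id,\wt{u}_0\,],[\,\id,\wt{u}_1\,]: \Tmon_\Sigma + \Tmon_\Omega \to \Tmon_\Sigma$, the first coproduct injection splits both, so the pair is reflexive and its coinserter is again $c_\Vvar$. Here $\Tmon_\Sigma + \Tmon_\Omega = \Tmon_{\Sigma+\Omega}$, since the free-monad functor $\wh{(-)}$ is a left adjoint and hence preserves coproducts, while $H_\Sigma + H_\Omega = H_{\Sigma+\Omega}$; in particular this monad is again strongly finitary by Example~\ref{ex:free-monad-from-endofunctor}. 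This reflexivisation is not a mere formality: the objectwise coinserter of the \emph{original} pair $\wt{u}_0,\wt{u}_1$ does not in general return $T_\Vvar$, as it records the inequations only up to substitution and transitivity and omits their closure under $\Sigma$-contexts (Bloom's admissibility). Passing through $\Tmon_{\Sigma+\Omega}$, whose terms already carry all $\Sigma$-contexts around the new operation symbols, supplies exactly the missing context-instances.

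The key lemma I would then prove is that the forgetful functor $U: \Mndsf(\Pos) \to \Endsf(\Pos)$ creates (and in particular preserves) reflexive coinserters. Since $\Mndsf(\Pos)$ is the category of monoids in $(\Endsf(\Pos),\comp,\Id)$ and is monadic over $\Endsf(\Pos)$ via the free-monad monad $\wh{(-)}$ (Remark~\ref{rem:sf-endofunctor-sf-monad}), it suffices to check that $\wh{(-)}$ preserves reflexive coinserters; and this follows because composition preserves reflexive coinserters separately in each variable: precomposition preserves all colimits, while postcomposition by a strongly finitary functor preserves reflexive coinserters by the very definition of strong finitariness. This is the $\Pos$-analogue of the classical fact that the forgetful functor from finitary monads on $\Set$ to finitary endofunctors creates sifted colimits, and I expect it to be the main obstacle.

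Granting the lemma, $T_\Vvar = U(\Tmon_\Vvar)$ is the reflexive coinserter, computed objectwise in $[\Pos,\Pos]$, of the parallel pair $T_{\Sigma+\Omega} \to T_\Sigma$ of strongly finitary endofunctors. A coinserter being a weighted colimit, Example~\ref{ex:strongly-finitary-functors}~(3) then yields that $T_\Vvar$ is strongly finitary, which is precisely the assertion of the corollary.
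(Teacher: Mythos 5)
Your proposal is correct, and it takes a genuinely different --- and more careful --- route than the paper. The paper's own argument consists of the single assertion that the coinserter of $\wt{u}_0,\wt{u}_1$ formed in $\Mndsf(\Pos)$ (Construction~\ref{con:monad-coinserter}) ``is'' a coinserter in $[\Pos,\Pos]$, followed by an appeal to Example~\ref{ex:strongly-finitary-functors}~(3); no justification is given for the descent from monads to endofunctors, and your diagnosis of why it cannot be taken at face value for the pair as written is accurate. Indeed, for the variety over $\Sigma=\{e,\cdot\}$ presented by $e\leq y$, the objectwise coinserter of $(\wt{u}_0)_X,(\wt{u}_1)_X$ at $X=\{x\}$ only adds the relation $e\leq x$, whereas $T_\Vvar\{x\}$ also has $e\leq x\cdot x$ and $x\cdot e\leq x\cdot x$; so the missing substitution instances and $\Sigma$-context closure are exactly as you describe. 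Your repair --- replacing the pair by the reflexive pair $[\id,\wt{u}_0],[\id,\wt{u}_1]\colon \Tmon_{\Sigma+\Omega}\to\Tmon_\Sigma$ with the same coinserter, and showing that the forgetful functor $\Mndsf(\Pos)\to\Endsf(\Pos)$ creates reflexive coinserters because composition preserves them in each variable (hence, these being sifted, jointly) --- is the Kelly--Power/Lack-style argument and supplies precisely the step the paper omits; as a by-product it exhibits $T_\Vvar$ as an honest objectwise reflexive coinserter of strongly finitary endofunctors, after which Example~\ref{ex:strongly-finitary-functors}~(3) applies. What the paper's version buys is brevity; what yours buys is an actual proof of the intermediate claim. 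The one point to tighten is the key lemma you rightly flag: rather than invoking monadicity of $\Mndsf(\Pos)$ over $\Endsf(\Pos)$ wholesale, it is cleaner (and sufficient) to run the direct argument that the forgetful functor from monoids in a monoidal $\Pos$-category whose tensor preserves reflexive coinserters in each variable creates reflexive coinserters.
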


\begin{example}
	A finitary monad on $\Pos$ need not be strongly finitary.
	(In contrast, every finitary monad on $\Set$ is strongly finitary in the sense of preserving reflexive coequalisers, see~\cite{kurz+rosicky}.)
	
	Denote by $\Vvar$ the category of partial algebras $(A,\alpha)$ where $A$ is a poset and $\alpha$ a monotone function assigning to every pair $a_0 \leq a_1$ in $A$ an element of $A$.
	Morphisms to $(B,\beta)$ are monotone functions $h: A \to B$ such that
	\[
	h \alpha (a_0,a_1) = \beta(h(a_0),h(a_1))
	\]
	holds for all $a_0 \leq a_1$.
	This is a `variety in context' as introduced in~\cite{adamek+chase+milius+schroder}, from which it follows that the forgetful functor  $U: \Vvar \to \Pos$ is finitary monadic, see Theorem 3.24 in op.\ cit.
	The corresponding monad $\Tmon$ assigns to a poset $X$ the poset $TX$ defined by induction as follows:
	\begin{enumerate}
		\item elements of $X$ are terms; they are ordered as in $X$, and
		\item given terms $u_0 \leq u_1$, then $\alpha(u_0,u_1)$ is a term and the ordering is pointwise: for terms $v_0 \leq v_1$ we have $\alpha(u_0,u_1) \leq \alpha(v_0,v_1)$ iff $u_i \leq v_i$ for $i=0,1$.
	\end{enumerate}
	This monad is not strongly finitary because for the 2-chain $P$ given by $x_0 \leq x_1$ it does not preserve its canonical reflexive coinserter (recall~Remark~\ref{rem:poset-canonical-presentation}):
	\[
	\begin{tikzcd}
		\{(x_0,x_0),(x_0,x_1),(x_1,x_1) \}
		\arrow[r, "p_1", bend left]
		\arrow[r, "p_0", swap, bend right]
		&
		\{ x_0, x_1 \}
		\arrow[r, "c"]
		&
		\{ x_0 \leq x_1 \}
	\end{tikzcd}
	\]
	Indeed, every coinserter is surjective, whereas $Tc$ is not: the element $\alpha(x_0,x_1)$ of $TP$ does not lie in the image of $Tc$.
\end{example}

\section{From Strongly Finitary Monads to Varieties}
\label{sec:sf-monads-to-varieties}

We now prove that the results of Section~\ref{sec:varieties-to-sf-monads} can be reversed: for every strongly finitary monad $\Tmon$ a variety is presented with $\Tmon$ as the free-algebra monad.

Recall that given a monad $\Tmon$ every morphism $f: X \to TY$ yields a homomorphism $f^*: (TX,\mu_X) \to (TY,\mu_Y)$ by $f^* = \mu_Y \comp Tf$.
Below we associate with every $n$-ary operation symbol $\sigma$ the term $\sigma(x_i)_{i<n}$ over $V$ (see~Definition~\ref{def:ordered-variety}).

\begin{definition}
\label{def:associated-variety}
For every monad $\Tmon$ on $\Pos$ the \emph{associated variety} $\Vvar_\Tmon$ has the signature $\Sigma$ whose $n$-ary symbols are the elements of $Tn$ ($n \in \Nat$).
The variety is presented by inequations as follows (with $n$ and $m$ ranging over $\Nat$):
\begin{enumerate}
	\item $\sigma(x_i) \leq \tau(x_i)$ for all $\sigma \leq \tau$ in $Tn$;
	\item $k^*(\sigma)(x_i) = \sigma(k_0(x_i),\dots,k_{m-1}(x_i))$ for all $m$-tuples $k: m \to Tn$, $k = (k_0,\dots,k_{m-1})$ and all $\sigma \in Tm$.
\end{enumerate}
\end{definition}

\begin{example}
Every algebra $\alpha: TA \to A$ in $\Pos^\Tmon$ yields a $\Sigma$-algebra in $\Vvar_\Tmon$: given an $n$-ary symbol $\sigma \in Tn$ and an $n$-tuple $f: n \to A$, let $f^+ = \alpha \comp Tf: (Tn, \mu_n) \to (A, \alpha)$ be the corresponding homomorphism for $\Tmon$.
We put
\[
\sigma_A(f) = f^+(\sigma).
\]
To verify that this $\Sigma$-algebra satisfies (1) in Definition~\ref{def:associated-variety}, observe that for every $n$-tuple $f: n \to A$ the corresponding $\Sigma$-homomorphism 
$f^\sharp: T_\Sigma V \to A$ fulfills
\[
f^\sharp(\sigma(x_i)) = f^+(\sigma) \text{ for all } \sigma \in Tn. \tag{3} \label{eq:3}
\]
This equality holds since $\sigma(x_i)$ is the result of the operation $\sigma$ in the algebra $T_\Sigma n$ (Example~\ref{ex:free-monad-from-endofunctor}) on $(x_i)$, thus, $f^\sharp(\sigma(x_i)) = \sigma_A(f(x_i))$.
Given $\sigma \leq \tau$ in $Tn$, then $f^+(\sigma) \leq f^+(\tau)$ since $f^+ = \alpha \comp Tf$ is monotone, thus $f^\sharp(\sigma(x_i)) \leq f^\sharp(\tau(x_i))$ holds.

To verify (2), we need to prove
\[
f^\sharp(k^*(\sigma)(x_i)) = f^\sharp(\sigma( k_0(x_i), \dots, k_{m-1}(x_i) ))
\]
for every $n$-tuple $f: n \to A$. Due to~\eqref{eq:3} above, the left-hand side is
\[
f^+(k^*(\sigma)).
\]
Since $f^\sharp$ is a homomorphism, the right-hand side is
\[
\sigma_A(f^\sharp(k_0(x_i)), \dots, f^\sharp(k_{m-1} (x_i) ) )
\]
which due to \eqref{eq:3} is equal to
\[
\sigma_A(f^+ \comp k) = (f^+ \comp k)^+(\sigma)
\]
Thus we only need to observe that
\[
f^+ \comp k^* = (f^+ \comp k)^+: (Tn, \mu_n) \to (A, \alpha). \tag{4} \label{eq:4}
\]
Indeed, both sides are homomorphisms in $\Pos^\Tmon$, and they are equal when precomposed with the universal map:
\[
f^+ \comp k^* \comp \eta_n = f^+ \comp k = (f^+ \comp k)^+ \comp \eta_n.
\]
\end{example}

\begin{remark}
\label{rem:monad-algebras-varietal}
We can thus consider $\Pos^\Tmon$ as a full subcategory of $\Vvar_\Tmon$.
Indeed, given two algebras $(A,\alpha)$ and $(B,\beta)$ in $\Pos^\Tmon$, then a monotone map $h: A \to B$ is a homomorphism in $\Pos^\Tmon$ iff it is a $\Sigma$-homomorphism:
\begin{enumerate}
	\item Let $h \comp \alpha = \beta \comp Th$.
	Then
	\[
	h \comp f^+ = (h \comp f)^+: (Tn, \mu_n) \to (A,\alpha)
	\]
	because both sides are homomorphisms of $\Pos^\Tmon$ extending $h \comp f$.
	For every $\sigma \in \Sigma_n$ and every $n$-tuple $f: n \to A$ we have
	\begin{align*}
	h(\sigma_A(f)) & = h \comp f^+(\sigma), \text{ by definition of $\sigma_A$} \\
	& = (h \comp f)^+, \text{ as $h \comp f^+ = (h \comp f)^+$} \\
	& = \sigma_B(h \comp f), \text{ by definition of $\sigma_B$.}
	\end{align*}
	Thus $h$ is a $\Sigma$-homomorphism.
	
	\item Let $h$ be a $\Sigma$-homomorphism.
	To prove that $h$ is a homomorphism of $T$-algebras, consider the diagram below for an arbitrary $n \in \Nat$ and $f: n \to A$.
	(Recall that $n$ is the discrete poset on $\{0,\dots,n-1\}$.)
	Since $\Tmon$ is finitary, it is sufficient to show that the desired square commutes when precomposed by $Tf$.
	\[
	\begin{tikzcd}
	Tn \arrow[r, "Tf"] & TA \arrow[r, "\alpha"] \arrow[d, "Th", swap] & A \arrow[d, "h"] \\
	& TB \arrow[r, "\beta", swap] & B
	\end{tikzcd}
	\]
	Indeed, given $\sigma \in Tn$ we have
	\begin{align*}
		\beta \comp Th \comp Tf (\sigma)
		& = (h \comp f)^+(\sigma), \text{ by definition of $(h \comp f)^+$} \\
		& = \sigma_B(h \comp f), \text{ by definition of $\sigma_B$} \\
		& = h(\sigma_A(f)), \text{ since $h$ is a $\Sigma$-homomorphism} \\
		& = h(f^+(\sigma)), \text{ by definition of $\sigma_A$} \\
		& = h(\alpha \comp Tf(\sigma)), \text{ by definition of $f^+$.}
	\end{align*}
\end{enumerate}
\end{remark}

\begin{theorem}
\label{thm:sf-monad-free-algebra-monad}
Every strongly finitary monad on $\Pos$ is the free-algebra monad of the associated variety $\Vvar_\Tmon$.
\end{theorem}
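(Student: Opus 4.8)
The plan is to promote the full embedding $\Pos^\Tmon \hookrightarrow \Vvar_\Tmon$ of Remark~\ref{rem:monad-algebras-varietal} to an isomorphism of categories over $\Pos$, and then to read off the statement from monadicity. This embedding is the identity on underlying posets and is full and faithful, so it suffices to show it is bijective on objects: every algebra $(A,(\sigma_A))$ of $\Vvar_\Tmon$ carries a \emph{unique} Eilenberg--Moore structure $\alpha\colon TA\to A$ whose induced $\Sigma$-operations are the given $\sigma_A$. Granting this, $\Pos^\Tmon\cong\Vvar_\Tmon$ as concrete categories over $\Pos$; since the forgetful functor of $\Vvar_\Tmon$ is monadic by Lemma~\ref{lem:varietal-forgetful-monadic} and that of $\Pos^\Tmon$ is monadic by definition, the two monads agree, so $\Tmon$ is the free-algebra monad $\Tmon_{\Vvar_\Tmon}$.

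Uniqueness and the underlying map of $\alpha$ come from finitariness. If $\alpha$ induces the operations then necessarily $\alpha\bigl(Tf(\sigma)\bigr)=\sigma_A(f)$ for every finite discrete $n$, every $f\colon n\to A$ and every $\sigma\in Tn$, by the defining formula $\sigma_A(f)=f^{+}(\sigma)$. As $T$ is finitary, $TA$ is the filtered colimit of the posets $Tn$ along the maps $f\colon n\to A$, so the elements $Tf(\sigma)$ exhaust $TA$ and $\alpha$ is forced. For existence I would define $\alpha$ by this formula and check it is a well-defined monotone map. Well-definedness at the level of sets amounts to the family $\sigma\mapsto\sigma_A(f)$ being a cocone over the diagram $(n,f)\mapsto Tn$; for a connecting map $h\colon n\to p$ with $g\comp h=f$ one has $Th=k^{*}$ for $k=\eta_p\comp h$, and axiom~(2) of Definition~\ref{def:associated-variety} rewrites the required equality $\sigma_A(f)=(Th(\sigma))_A(g)$ into $g(h(j))=(\eta_p(h(j)))_A(g)$, i.e.\ into the fact that the unit operations $\eta_p(i)$ act as the projections $\pi_i$ in every algebra of $\Vvar_\Tmon$. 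The two Eilenberg--Moore laws are then verified on the generators $Tf(\sigma)$: the unit law $\alpha\comp\eta_A=\id$ is again this projection property, while the associativity law $\alpha\comp\mu_A=\alpha\comp T\alpha$ is exactly axiom~(2), the substitution identity~\eqref{eq:4}, both sides being $\Tmon$-homomorphisms that agree after precomposition with the universal map.

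The genuinely new point --- and the step I expect to be the main obstacle --- is to see that $\alpha$ is well defined and monotone \emph{as a map of posets}, and this is where \emph{strong} finitariness, rather than mere finitariness, is indispensable. The order of $A$, hence of $TA$, is not visible at the discrete level alone: by Remark~\ref{rem:poset-canonical-presentation} each finite poset is a canonical \emph{reflexive coinserter} of discrete maps $k\rightrightarrows n$, and $A$ is the filtered colimit of its finite subposets. Because $T$ preserves reflexive coinserters and filtered colimits, $TA$ is generated from the discrete data $Tn$ through precisely these colimits, and the comparisons of $A$ that they encode are matched by axiom~(1), which forces $\sigma\mapsto\sigma_A(f)$ to respect the order. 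Thus the cocone above is a cocone of posets, $\alpha$ descends to a monotone map, and bijectivity on objects follows. That the plain-finitary hypothesis is insufficient here is exactly the content of the partial-algebra example at the end of Section~\ref{sec:varieties-to-sf-monads}: there the canonical reflexive coinserter is not preserved, and the reconstruction of $\alpha$ breaks down.
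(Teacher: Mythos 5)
Your overall strategy is viable but organised differently from the paper's: the paper proves that $(TX,\mu_X)$, regarded as a $\Sigma$-algebra, is the \emph{free} algebra of $\Vvar_\Tmon$ on $X$ (reducing to finite posets by filtered colimits and then descending a homomorphism $f'\colon Tn\to A$ along the coinserter $Tn\to TP$), whereas you prove that the full embedding $E\colon\Pos^\Tmon\to\Vvar_\Tmon$ of Remark~\ref{rem:monad-algebras-varietal} is bijective on objects and read the theorem off from monadicity. Both routes rest on the same colimit presentation of $A$ (filtered colimits of canonical reflexive coinserters of discrete posets) and both finish with Lemma~\ref{lem:varietal-forgetful-monadic}, so the difference is one of packaging rather than of substance. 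One shared point deserves attention: both your argument and the paper's use that the operations $\eta_n(i)\in Tn$ act as the $i$-th projections in every algebra of $\Vvar_\Tmon$. This does not follow from axioms (1)--(2) of Definition~\ref{def:associated-variety} as stated (an algebra on a two-element poset in which every operation is a fixed constant satisfies all of them), so it must be imposed as an additional axiom rather than quoted as a ``fact''.

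The concrete error is in your uniqueness step: ``As $T$ is finitary, $TA$ is the filtered colimit of the posets $Tn$ along the maps $f\colon n\to A$, so the elements $Tf(\sigma)$ exhaust $TA$.'' This is false for merely finitary $T$. The filtered diagram of finite \emph{discrete} subposets of $A$ has colimit the discrete poset $|A|$, not $A$; finitariness only gives $TA=\colim TP$ over the finite subposets $P$ with their induced order, and the maps $Tf$ with discrete domain need not be jointly surjective onto $TA$. The paper's partial-algebra example at the end of Section~\ref{sec:varieties-to-sf-monads} is precisely a finitary monad for which $\alpha(x_0,x_1)\in T\Two$ lies in no $Tf(Tn)$ with $n$ discrete, so for such a monad the values $\sigma_A(f)$ would \emph{not} force $\alpha$. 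Strong finitariness is therefore needed already to determine the underlying function of $\alpha$ (via surjectivity of the coinserter maps $Tn\to TP$), not only its monotonicity; your division of labour between ``finitary $\Rightarrow$ underlying map'' and ``strongly finitary $\Rightarrow$ order'' is wrong, although the step itself is true under the theorem's hypothesis and the proof is repairable by invoking preservation of the canonical coinserters at this point as well. A smaller inaccuracy: the descent of $\sigma\mapsto\sigma_A(f)$ along the coinserter $Tk\rightrightarrows Tn\to TP$ is secured by axiom (2) together with monotonicity of the operations (since $Tp_j=(\eta_n\comp p_j)^*$ and $f\comp p_0\leq f\comp p_1$ pointwise), while axiom (1) only controls the order inside each $Tn$; your attribution of the coinserter comparisons to axiom (1) is off.
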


\begin{proof}
\phantom{formatting}
\begin{enumerate}
	\item For every poset $X$ we prove that the free algebra $(TX,\mu_X)$ on $X$ in $\Pos^\Tmon$, considered as a $\Sigma$-algebra, is free on $X$ in $\Vvar_\Tmon$ w.r.t.\ $\eta_X$ as the universal map.
	
	To verify this, we can restrict ourselves to finite posets $X$.
	Then it holds for all posets since $T$ preserves filtered colimits: express $X = \colim_{i \in I} X_i$ as a filtered colimit of finite posets, then $TX = \colim_{i \in I} TX_i$, and from Remark~\ref{rem:monad-algebras-varietal} we conclude that the $\Sigma$-algebra $TX$ is a filtered colimit of $TX_i$ ($i \in I$) in $\Alg(\Sigma)$.
	Thus from $TX_i$ being a free $\Sigma$-algebra on $X_i$ in $\Vvar_\Tmon$ we conclude that $TX$ is a free $\Sigma$-algebra on $X$.
	
	Let $P$ be a finite poset, say, on the set $\{x_0, \dots, x_{n-1}\}$.
	Then its canonical coinserter (Remark~\ref{rem:poset-canonical-presentation}) yields, since $T$ is strongly finitary, the following coinserter
	\[
	\begin{tikzcd}
		Tk
		\arrow[r, "Tp_1", bend left]
		\arrow[r, "Tp_0", swap, bend right]
		&
		Tn
		\arrow[r, "\id"]
		&
		TP
	\end{tikzcd}
	\]
	The free algebras $Tk$ and $Tn$ of $\Pos^\Tmon$ are also free $\Sigma$-algebras in $\Vvar_\Tmon$: see Remark~\ref{rem:monad-algebras-varietal}.
	Given an algebra $A$ of $\Vvar_\Tmon$ and a monotone function $f: P \to A$, we thus have a unique $\Sigma$-homomorphism $f': Tn \to A$ with $f = f' \comp \eta_n$.
	To prove that $f'$ is also a $\Sigma$-homomorphism $f': TP \to A$, it is sufficient to verify
	\[
	f' \comp Tp_0 \leq f' \comp Tp_1: Tk \to A.
	\]
	
	\[
	\begin{tikzcd}
		k
		\arrow[r, "p_1", bend left]
		\arrow[r, "p_0", swap, bend right]
		\arrow[d, "\eta_k", swap]
		&
		n
		\arrow[d, "\eta_n"]
		\arrow[rd, "f"]
		&
		\\
		Tk
		\arrow[r, "Tp_1", bend left]
		\arrow[r, "Tp_0", swap, bend right]
		&
		Tn
		\arrow[r, "f'", swap]
		&
		A
	\end{tikzcd}
	\]
	Thus we need to prove that for each $x \in Tk$ we have $f'(Tp_0(x)) \leq f'(Tp_1(x))$.
	Indeed, this holds for all variables $y_i \in k$:
	\begin{align*}
	f' \comp Tp_0(\eta_k(y_i))
	& = f(p_0(y_i)), \text{ by the diagram above} \\
	& \leq f(p_1(y_i)), \text{ by $f$ being monotone} \\
	& = f' \comp Tp_1(\eta_k(y_i)), \text{ by the diagram above.}
	\end{align*}
	And thus we only need to observe that the set of all $x \in Tk$ with the desired property is closed under the $\Sigma$-operations.
	For every $\sigma \in \Sigma_n$ and every $n$-tuple $(x_i)_{i<n}$ with $f' \comp Tp_0(\sigma_{Tk}(x_i)) \leq f' \comp Tp_1(x_i)$ we have (since $Tp_i$ are homomorphisms of $\Pos^\Tmon$)
	\begin{align*}
	f' \comp Tp_0(\sigma_{Tk}(x_i))
	& = f'(\sigma_{Tn}(Tp_0(x_i))), \text{ by Remark~\ref{rem:monad-algebras-varietal}} \\
	& = \sigma_A(f'(Tp_0(x_i))), \text{ since $f'$ is a $\Sigma$-homomorphism} \\
	& \leq \sigma_A(f'(Tp_1(x_i))), \text{ since $\sigma_A$ is monotone} \\
	& = f' \comp Tp_1(\sigma_{Tk}(x_i)) \text{ as above.}
	\end{align*}
	
	\item The full embedding $E: \Pos^\Tmon \to \Vvar_\Tmon$ of~Remark~\ref{rem:monad-algebras-varietal} is concrete.
	That is, if $U: \Pos^\Tmon \to \Pos$ and $V: \Vvar_\Tmon \to \Pos$ denote the forgetful functors, the triangle
	\[
	\begin{tikzcd}
	{\Pos^\Tmon}
	\arrow[r, "E"]
	\arrow[dr, "U", swap]
	&
	\Vvar_\Tmon
	\arrow[d, "V"]
	\\
	&
	\Pos
	\end{tikzcd}
	\]
	commutes.
	Both $U$ and $V$ are monadic functors by Lemma~\ref{lem:varietal-forgetful-monadic}.
	It follows from (1) that the corresponding monads are isomorphic.
\end{enumerate}
\end{proof}

\begin{notation}
Let $\Var(\Pos)$ denote the category of varieties of ordered algebras and concrete functors.
These are functors $F: \Vvar_1 \to \Vvar_2$ which commute (strictly) with the forgetful functors $U_i: \Vvar_i \to \Pos$:
\[
\begin{tikzcd}
	\Vvar_1
	\arrow[r, "F"]
	\arrow[dr, "U_1", swap]
	&
	\Vvar_2
	\arrow[d, "U_2"]
	\\
	&
	\Pos
\end{tikzcd}
\]
\end{notation}

\begin{theorem}
\label{thm:kv}
The category of varieties is dually equivalent to the category of strongly finitary monads:
\[
\Var(\Pos) \cong \Mndsf(\Pos)^\op.
\]
\end{theorem}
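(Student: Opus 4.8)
The plan is to exhibit a pair of mutually quasi-inverse functors between $\Var(\Pos)$ and $\Mndsf(\Pos)^\op$, combining the object-level correspondence already established with the classical bijection between concrete functors and monad morphisms. On objects everything is in place: to a variety $\Vvar$ one assigns its free-algebra monad $\Tmon_\Vvar$, which is strongly finitary by Corollary~\ref{cor:variety-monad-sf}; conversely, to a strongly finitary monad $\Tmon$ one assigns the associated variety $\Vvar_\Tmon$ of Definition~\ref{def:associated-variety}. Theorem~\ref{thm:sf-monad-free-algebra-monad} gives $\Tmon_{\Vvar_\Tmon}\cong\Tmon$, while Lemma~\ref{lem:varietal-forgetful-monadic} together with Theorem~\ref{thm:sf-monad-free-algebra-monad} gives $\Vvar\cong\Pos^{\Tmon_\Vvar}\cong\Vvar_{\Tmon_\Vvar}$ as concrete categories, so the two object assignments are inverse up to concrete isomorphism.

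The crux is therefore the behaviour on morphisms, which rests on the classical bijection: for monads $\Smon,\Tmon$ on $\Pos$, concrete functors $\Pos^\Tmon\to\Pos^\Smon$ correspond bijectively to monad morphisms $\Smon\to\Tmon$. In one direction, a monad morphism $b\colon\Smon\to\Tmon$ yields, by the restriction of structure $(A,\alpha)\mapsto(A,\alpha\comp b_A)$ of Remark~\ref{rem:spitze-klammern}~(3), a concrete functor $\Pos^\Tmon\to\Pos^\Smon$ that is the identity on underlying posets and maps. In the other direction, from a concrete functor $G\colon\Pos^\Tmon\to\Pos^\Smon$ one recovers $b$ by evaluating $G$ on free algebras: $G(TX,\mu^T_X)$ is an $\Smon$-algebra $(TX,\theta_X)$ on the same poset, and $b_X:=\theta_X\comp S\eta^T_X\colon SX\to TX$ defines a monad morphism $\Smon\to\Tmon$. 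A routine verification shows these assignments are mutually inverse, compose contravariantly, and preserve the pointwise order of monad morphisms, so the correspondence is in fact $\Pos$-enriched.

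Via the comparison isomorphisms $\Vvar_i\cong\Pos^{\Tmon_{\Vvar_i}}$ of Lemma~\ref{lem:varietal-forgetful-monadic}, a concrete functor $F\colon\Vvar_1\to\Vvar_2$ is thereby turned into a monad morphism $\Tmon_{\Vvar_2}\to\Tmon_{\Vvar_1}$, that is, a morphism of $\Mndsf(\Pos)^\op$, and conversely. This defines the two functors $\Var(\Pos)\to\Mndsf(\Pos)^\op$ and $\Mndsf(\Pos)^\op\to\Var(\Pos)$; their functoriality and the quasi-inverse property then follow from the naturality of the bijection together with the object-level isomorphisms above. The correspondence automatically restricts to the strongly finitary setting, since the monads arising from varieties are strongly finitary (Corollary~\ref{cor:variety-monad-sf}), every strongly finitary monad is such a monad (Theorem~\ref{thm:sf-monad-free-algebra-monad}), and monad morphisms among strongly finitary monads are simply monad morphisms.

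The hard part will be the morphism correspondence rather than the objects: one must check carefully that the recovered data $b_X=\theta_X\comp S\eta^T_X$ (a)~is natural in $X$ and respects unit and multiplication, hence is a genuine monad morphism, (b)~is inverse to restriction of structure, and (c)~is natural in $\Smon$ and $\Tmon$, so that the two constructions genuinely assemble into quasi-inverse functors. All the enriched bookkeeping---local monotonicity of the recovered morphisms and preservation of the order $b\le b'$---has to be tracked, but this uses only the free-algebra arguments already employed in Proposition~\ref{prop:free-algebra-monad}, so no new ideas beyond those are required.
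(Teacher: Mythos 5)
Your proposal is correct and follows essentially the same route as the paper: transfer concrete functors between varieties to concrete functors between Eilenberg--Moore categories via the comparison isomorphisms of Lemma~\ref{lem:varietal-forgetful-monadic}, invoke the classical bijection between such functors and monad morphisms (which the paper simply cites from Barr--Wells, where you sketch the inverse $b_X=\theta_X\comp S\eta^T_X$ explicitly), and obtain essential surjectivity from Theorem~\ref{thm:sf-monad-free-algebra-monad} together with Corollary~\ref{cor:variety-monad-sf}. The only cosmetic difference is that you package the result as a pair of quasi-inverse functors, whereas the paper exhibits a single functor $R$ and checks it is full, faithful and essentially surjective.
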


\begin{proof}
\phantom{formatting}
\begin{enumerate}
\item 
Let $F: \Vvar_1 \to \Vvar_2$ be a concrete functor.
The comparison functors $K_i: \Vvar_i \to \Pos^{\Tmon_{\Vvar_i}}$ are isomorphisms of categories by Lemma~\ref{lem:varietal-forgetful-monadic}.
These isomorphisms are concrete: if $U_i': \Pos^{\Tmon_{\Vvar_i}} \to \Pos$ denotes the underlying functor, then $U_i = U_i' \comp K_i$.
From $F$ we thus obtain a concrete functor
\[
\ol{F} = K_2 \comp F \comp K_1^{-1}: \Pos^{\Tmon_{\Vvar_1}} \to \Pos^{\Tmon_{\Vvar_2}}.
\]
\[
\begin{tikzcd}
\Vvar_1
\arrow[rr, "F"]
\arrow[rd, "U_1"]
&
&
\Vvar_2
\arrow[dd, "K_2"]
\arrow[ld, swap, "U_2"]
\\
&
\Pos
&
\\
\Pos^{\Tmon_{\Vvar_1}}
\arrow[rr, swap, "\ol{F}"]
\arrow[ur, "U_1'"]
\arrow[uu, "K_1^{-1}"]
&
&
\Pos^{\Tmon_{\Vvar_2}}
\arrow[ul, swap, "U_2'"]
\end{tikzcd}
\]
The passage $F \mapsto \ol{F}$ is bijective (with the inverse passage $K_2^{-1} \comp (\blank) \comp K_1$) and preserves composition and identity morphisms.

\item 
Given monads $\Tmon_1$, $\Tmon_2$, monad morphisms $\rho: \Tmon_2 \to \Tmon_1$ bijectively correspond to concrete functors from $\Pos^{\Tmon_1}$ to $\Pos^{\Tmon_2}$: the bijection takes $\rho$ to $H_\rho: \Pos^{\Tmon_1} \to \Pos^{\Tmon_2}$ assigning to an algebra $\alpha: T_1 A \to A$ in $\Pos^{\Tmon_1}$ the algebra
\[
T_2 A \xrightarrow{\rho_A} T_1 A \xrightarrow{\alpha} A
\]
in $\Pos^{\Tmon_2}$.
This passage $\rho \mapsto H_\rho$ moreover preserves composition and indentity morphisms.
See~\cite{barr+wells}, Theorem~3.6.3.

\item 
Define a functor
\[
R: \Var(\Pos) \to \Mndsf(\Pos)^\op
\]
on objects by
\[
R(\Vvar) = \Tmon_\Vvar
\]
and on morphisms $F: \Vvar_1 \to \Vvar_2$ by the following rule
\[
R(F) = \rho \text{ iff } H_\rho = \ol{F}.
\]
It follows from (1) and (2) that $R$ is a well-defined full and faithful functor.
Theorem~\ref{thm:sf-monad-free-algebra-monad} tells us that every strongly finitary monad is isomorphic to $R(\Vvar)$ for some variety $\Vvar$.
Therefore, $R$ is an equivalence of categories.
\end{enumerate}
\end{proof}

\section{Lifting Finitary Monads from $\Set$ to $\Pos$}

The examples of varieties of ordered algebras presented so far are all liftings of varieties of classical algebras (over $\Set$).
In the present section we prove that this is no coincidence: there are no other examples.
Since varieties of ordered algebras are in a bijective correspondence with strongly finitary monads on $\Pos$ (and varieties of classical algebras are in a bijective correspondence with finitary monads on $\Set$), an equivalent statement is the following theorem.

\begin{theorem}
\label{thm:strfin-monad-lifting}
Every strongly finitary monad $(T, \mu, \eta)$ on $\Pos$ is a lifting of a finitary monad $(\wt{T}, \wt{\mu}, \wt{\eta})$ on $\Set$: for every poset $X$ the underlying set of $TX$ is $\wt{T} |X|$, and the underlying maps of $\mu_X$ and $\eta_X$ are $\wt{\mu}_X$ and $\wt{\eta}_X$, resp.
\end{theorem}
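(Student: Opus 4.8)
The plan is to push $\Tmon$ through the adjunction $D \dashv |{-}|$, where $D\colon \Set \to \Pos$ is the discrete-poset functor and $|{-}|\colon \Pos \to \Set$ is the forgetful functor, and to set $\wt{T} = |{-}| \comp T \comp D$ on objects and morphisms. Since $D$ is a left adjoint it preserves filtered colimits, $T$ is finitary, and $|{-}|$ preserves filtered colimits (these are computed on underlying sets in $\Pos$), so $\wt{T}$ is finitary. Using that $D$ is fully faithful, so $|{-}| \comp D = \Id_\Set$, the unit is defined directly by $\wt{\eta}_S = |\eta_{DS}| \colon S = |DS| \to |TDS| = \wt{T}S$. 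Everything else in the theorem reduces to a single comparison: writing $\varepsilon_X\colon D|X| \to X$ for the counit (the identity on underlying sets, adjoining the order of $X$), I must show that
\[
|T\varepsilon_X| \colon \wt{T}|X| = |TD|X|| \longrightarrow |TX|
\]
is a bijection, naturally in $X$. Granting this, the multiplication $\wt{\mu}$, the monad laws, and the fact that $\eta_X,\mu_X$ lift $\wt{\eta}_{|X|},\wt{\mu}_{|X|}$ all follow formally from naturality of $\varepsilon$ and the monad laws for $\Tmon$.

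To prove $|T\varepsilon_X|$ is a bijection I would first reduce to finite posets: every poset is a filtered colimit of its finite subposets, and $T$, $D$, $|{-}|$ preserve filtered colimits while $\varepsilon$ is natural, so it suffices to treat finite $X = P$. For such $P$ the counit $\varepsilon_P$ is exactly the canonical coinserter map $n \to P$ of Remark~\ref{rem:poset-canonical-presentation} (with $n = D|P|$ discrete). As $T$ is strongly finitary it preserves this reflexive coinserter, so $T\varepsilon_P$ is the coinserter of $Tp_0,Tp_1\colon Tk \to Tn$. \emph{Surjectivity} of $|T\varepsilon_P|$ is immediate, since coinserter maps in $\Pos$ are surjective on underlying sets. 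The content is \emph{injectivity}, equivalently: the least preorder on $|Tn|$ containing the order of $Tn$ together with all pairs $(Tp_0(z),Tp_1(z))$ is already antisymmetric, so that the poset reflection forming the coinserter collapses nothing. This non-collapsing is the main obstacle, because reflexivity of the pair alone does \emph{not} prevent a coinserter from identifying elements; what must be exploited is that the presented relation $\{(p_0(t),p_1(t))\} = {\le_P}$ is itself a \emph{partial order}.

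My proposed route to injectivity is to discharge the variable order by a linear extension. Choose a monotone bijection $\ell\colon P \to \mathbf{c}$ onto a finite chain. Then $T\ell \comp T\varepsilon_P = T(\ell \comp \varepsilon_P)$, and since $\ell \comp \varepsilon_P$ is a bijection on underlying sets, injectivity of $|T(\ell\comp\varepsilon_P)|$ would force injectivity of $|T\varepsilon_P|$. Factoring $\ell \comp \varepsilon_P$ through $D|\mathbf{c}|$ as $\varepsilon_{\mathbf c} \comp D(\ell')$, where $\ell'$ is the underlying bijection, and noting that $\wt{T}$ sends the bijection $\ell'$ to a bijection, this reduces the whole problem to injectivity of $|T\varepsilon_{\mathbf c}|$ for a \emph{chain} $\mathbf c$, where the presented relation is a total order. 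Establishing antisymmetry of the generated preorder on $|TD|\mathbf{c}||$ in this base case is the crux on which the theorem rests; I would attempt it either by exhibiting a monotone invariant compatible with the total order (as happens in the ordered-monoid example, where word length blocks cycles) or by a further reduction of the chain to powers of the two-element chain via the retraction exhibiting $\mathbf c$ as a retract of $\mathbf 2^{\,c-1}$. Once antisymmetry is in hand, naturality upgrades the pointwise bijections $|T\varepsilon_X|$ to a natural isomorphism $|{-}|\comp T \cong \wt{T}\comp|{-}|$, and the monad structure transports automatically, exhibiting $\Tmon$ as a lifting of the finitary monad $\wt{\Tmon}$ on $\Set$.
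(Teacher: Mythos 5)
Your reduction is sound as far as it goes: defining $\wt{T}=|{-}|\comp T\comp D$, reducing to finite posets by finitarity, recognising $T\varepsilon_P$ as the coinserter of $Tp_0,Tp_1:Tk\to Tn$, getting surjectivity for free, and reducing injectivity to antisymmetry of the generated preorder (and then to chains via a linear extension) are all correct steps. But the proof stops exactly where the theorem begins. The antisymmetry of the preorder on $|Tn|$ generated by the order of $Tn$ and the pairs $(Tp_0(z),Tp_1(z))$ is the entire content of the statement, and neither of your two suggested routes is developed or likely to work for an abstract strongly finitary monad: a ``monotone invariant'' such as word length is available in concrete examples but there is no candidate for a general $T$; and the retraction of a chain onto a retract of $\Two^{c-1}$ does not obviously help because $T$ is not assumed to preserve finite products, so $T(\Two^{c-1})$ has no useful relation to $(T\Two)^{c-1}$. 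As written, the argument establishes only that the theorem is equivalent to this non-collapsing claim, not the claim itself. (A secondary, minor point: your construction would in any case yield a natural bijection $|TX|\cong\wt{T}|X|$, i.e.\ a non-strict lifting, and you would still need the paper's Remark on non-strict liftings to pass to the strict formulation in the statement.)

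The paper avoids this obstacle entirely by not working with an abstract $T$: it invokes the duality with varieties (Theorem~\ref{thm:kv}), so that it suffices to treat free-algebra monads $\Tmon_\Vvar$, and then uses Bloom's explicit construction of $T_\Vvar X$ as the quotient $|T_\Sigma X|/\Eeq_X^o$ ordered by $\Eeq_X$. In that presentation the underlying set is visibly the quotient of the term algebra by a congruence that can be matched with the one defining the free algebra of the associated unordered variety $\wt{\Vvar}$ on $|X|$ --- which is precisely the ``invariant'' your direct approach is missing. If you want to keep your more conceptual route, you should expect to have to import a presentation of $T$ as a coinserter quotient of some $\Tmon_\Sigma$ (Construction~\ref{con:monad-coinserter}) before the antisymmetry becomes provable.
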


Before proving this theorem, we explain why we have decided for the above strict variant of lifting.

\begin{remark}
\phantom{formatting}
\begin{enumerate}
	\item There is a less strict concept of a lifting of an ordinary monad $\wt{\Tmon}$ on $\Set$: denote by $U: \Pos \to \Set$ the forgetful functor.
	A monad $\Tmon$ on $\Pos$ is a \emph{non-strict lifting} of $\wt{\Tmon}$ iff there is a natural isomorphism $\phi$
	\[
	\begin{tikzcd}
		\Pos
		\arrow[r, "T"]
		\arrow[d, "U", swap]
		\arrow[rd, phantom, "\phi \lDar"{middle}]
		&
		\Pos
		\arrow[d, "U"]
		\\
		\Set
		\arrow[r, "\wt{T}", swap]
		&
		\Set
	\end{tikzcd}
	\]
	such that the following diagrams commute:
	\[
	\begin{tikzcd}
		&
		U
		\arrow[ld, "U \eta", swap]
		\arrow[rd, "\wt{\eta}U"]
		&
		\\
		UT
		\arrow[rr, "\phi", swap]
		&&
		\wt{T}U
	\end{tikzcd}
	\]
	\[
	\begin{tikzcd}
	UTT
	\arrow[r, "\phi T"]
	\arrow[d, "U \mu", swap]
	&
	\wt{T}UT
	\arrow[r, "\wt{T} \phi"]
	&
	\wt{T}\wt{T}U
	\arrow[d, "\wt{\mu} U"]
	\\
	UT
	\arrow[rr, "\phi", swap]
	&
	&
	\wt{T}U
	\end{tikzcd}
	\]
	\item Given $\phi$ as above, $\Tmon$ is isomorphic to a monad $\Tmon_\ordi$ on $\Pos$ which is a strict lifting of $\wt{\Tmon}$ (i.e., for which the conditions in the above theorem hold).
	Indeed, define $\Tmon_\ordi = (T_\ordi,\mu_\ordi,\eta_\ordi)$ by letting $T_\ordi X$ be the unique poset on the set $\wt{T}|X|$ for which $\phi_X$ carries an isomorphism $TX \cong T_\ordi X$ in $\Pos$.
	Analogously define $T_\ordi$ on morphisms $f: X \to Y$:  the underlying map of $T_\ordi f$ is such that the square
	\[
	\begin{tikzcd}
		TX
		\arrow[r, "Tf"]
		\arrow[d, "\phi_X", swap]
		&
		TY
		\arrow[d, "\phi_Y"]
		\\
		T_\ordi X
		\arrow[r, "T_\ordi f", swap]
		&
		T_\ordi Y
	\end{tikzcd}
	\]
	commutes.
	The unit of $\Tmon_\ordi$ has components $\phi_X \comp \eta_X: X \to T_\ordi X$ and the multiplication $(\mu_\ordi)_X: T_\ordi T_\ordi X \to T_\ordi X$ is the unique monotone map for which the square
	\[
	\begin{tikzcd}
		TTX
		\arrow[r, "\mu_X"]
		\arrow[d, "(\phi * \phi)_X", swap]
		&
		TX
		\arrow[d, "\phi_X"]
		\\
		T_\ordi T_\ordi X
		\arrow[r, "(\mu_\ordi)_X", swap]
		&
		T_\ordi X
	\end{tikzcd}
	\]
	commutes.
	It is easy to see that $\Tmon_\ordi = (T_\ordi,\eta_\ordi,\mu_\ordi)$ is a well-defined monad on $\Pos$ isomorphic to $\Tmon$ via $\phi$, and that it is a strict lifting of $\wt{\Tmon}$.
	
\end{enumerate}
\end{remark}

\begin{proof}[Proof of Theorem~\ref{thm:strfin-monad-lifting}]
In view of Theorem~\ref{thm:kv} it is sufficient to present, for every variety $\Vvar$ of ordered $\Sigma$-algebras, a variety $\wt{\Vvar}$ of non-ordered algebras such that $\Tmon_\Vvar$ is a lifting of $\Tmon_{\wt{\Vvar}}$.
Here $\Tmon_\Vvar$ is the ordinary $\Vvar$-free-algebra monad on $\Pos$, and $\Tmon_{\wt{\Vvar}}$ the $\wt{\Vvar}$-free-algebra (ordinary) monad on $\Set$.
Recall that we consider an arbitrary set as the poset with the trivial order.
\begin{enumerate}
	
	\item For our standard set $V = \{ x_0, x_1, x_2,\dots \}$ of variables in Definition~\ref{def:ordered-variety} we have defined a set $\Eeq_V^o = \Eeq_V \cap \Eeq_V^{-1}$ of equations in Construction~\ref{con:varietal-free-algebra}: they are those equations $s = t$ which every ordered algebra in $\Vvar$ satisfies.
	(Since this is equivalent to satisfying both $s \leq t$ and $t \leq s$.)
	We denote by $\wt{\Vvar}$ the variety of non-ordered algebras presented by $\Eeq_V^o$.
	This clearly implies that every algebra in $\wt{\Vvar}$ satisfies, for every set $X$, all equations $s = t$ for pairs in $\Eeq_X^o$. Moreover, $\Eeq_X^o$ is clearly a congruence on the non-ordered $\Sigma$-algebra $\wt{T}_\Sigma X$ of all $\Sigma$-terms on $X$.
	
	\item Denote by $T_{\wt{\Vvar}} X$ the free algebra of $\wt{\Vvar}$ on the set $X$.
	It can be constructed as the quotient of the non-ordered algebra $\wt{T}_\Sigma X$ modulo the congruence $\Eeq_X^o$:
	\[
	T_{\wt{\Vvar}} X = \wt{T}_\Sigma X / \Eeq_X^o.
	\]
	The proof is completely analogous to that of Theorem~\ref{thm:free-ordered-algebra}.
	We thus conclude that for an arbitrary poset $X$ our choice of $T_\Vvar X$ and $T_{\wt{\Vvar}} |X|$ can be such that the underlying set of $T_\Vvar X$ is $T_{\wt{\Vvar}} |X|$ and all operations are equal.
	The universal arrows $(\eta_\Vvar)_X: X \to |T_\Vvar X|$ and $(\eta_{\wt{\Vvar}})_{|X|}: |X |\to T_{\wt{\Vvar}} |X|$ are both given by forming the equivalence classes of $x \in X$ modulo $\Eeq_X^o$, thus $\eta_{\wt{\Vvar}}$ is the underlying map of $\eta_\Vvar$.
	The multiplication $(\mu_\Vvar)_X: T_\Vvar T_\Vvar X \to T_\Vvar X$ is an interpretation of every term $t \in T_\Vvar X$ over the poset $T_\Vvar X$ of $\Sigma$-terms as a term $\mu_\Vvar(t)$ over $X$ modulo $\Eeq_X^o$.
	This interpretation is independent of the ordering of $X$, shortly, the underlying function of $(\mu_\Vvar)_X$ is the corresponding interpretation $(\wt{\mu}_\Vvar)_{|X|}$ of terms modulo $\Eeq_X^o$ w.r.t.\ $\wt{\Tmon}$.
	
\end{enumerate}
\end{proof}

\begin{definition}
A variety $\Vvar$ of ordered algebras is called a \emph{lifting} of a variety $\wt{\Vvar}$ of classical (non-ordered) algebras if a functor from $\Vvar$ to $\wt{\Vvar}$ is given which is concrete over $\Set$ and takes the free algebra on any poset $X$ to the free algebra on $|X|$.
\end{definition}

\begin{corollary}
Every variety of ordered algebras is a lifting of some classical variety.
\end{corollary}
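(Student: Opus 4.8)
The plan is to derive the corollary directly from Theorem~\ref{thm:strfin-monad-lifting} and its proof, which already carries out the substantive work; what remains is only to repackage the monad-level statement as a statement about varieties and the promised functor. Given a variety $\Vvar$ of ordered $\Sigma$-algebras, its free-algebra monad $\Tmon_\Vvar$ is strongly finitary by Corollary~\ref{cor:variety-monad-sf}, so Theorem~\ref{thm:strfin-monad-lifting} applies. Its proof produces a classical variety $\wt{\Vvar}$, presented by the set $\Eeq_V^o$ of equations holding in every algebra of $\Vvar$, together with the fact that $\Tmon_\Vvar$ is a strict lifting of $\Tmon_{\wt{\Vvar}}$. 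It therefore suffices to exhibit a functor $F \colon \Vvar \to \wt{\Vvar}$ and to verify the two conditions in the definition of a lifting.

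First I would define $F$ on objects by forgetting the order: an ordered algebra $A \in \Vvar$ is sent to its underlying set $|A|$ equipped with the very same $\Sigma$-operations. The one thing to check is that this classical algebra lies in $\wt{\Vvar}$, i.e.\ that it satisfies every equation in $\Eeq_V^o$. But an equation $s = t$ belongs to $\Eeq_V^o$ precisely because both $s \leq t$ and $t \leq s$ are satisfied by every algebra of $\Vvar$; since $A \in \Vvar$, its underlying algebra $|A|$ satisfies $s = t$. On morphisms $F$ sends an ordered homomorphism $h \colon A \to B$ to the underlying function $|h|$, which still preserves all operations and is thus a homomorphism of classical algebras, so $F$ is functorial.

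Next I would verify the two required properties. Concreteness over $\Set$ is immediate: the forgetful functor $\Vvar \to \Pos$ followed by the forgetful functor $\Pos \to \Set$ coincides with the composite of $F$ with the forgetful functor $\wt{\Vvar} \to \Set$, since $F$ alters neither the underlying set nor the operations. Preservation of free algebras is exactly the content extracted from the theorem: applied to the free ordered algebra $T_\Vvar X$ on a poset $X$, the functor $F$ returns its underlying set together with its operations, and by the proof of Theorem~\ref{thm:strfin-monad-lifting} this is precisely $T_{\wt{\Vvar}} |X|$, the free algebra of $\wt{\Vvar}$ on $|X|$.

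I expect no genuinely hard step here; the whole weight of the argument sits in Theorem~\ref{thm:strfin-monad-lifting}, which has already matched up underlying sets, operations, units and multiplications. The only point deserving care is that the order-forgetting assignment is functorial and actually lands inside $\wt{\Vvar}$, and this is guaranteed by the characterisation of $\Eeq_V^o$ as the two-sided part of the inequations defining $\Vvar$.
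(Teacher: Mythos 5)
Your proof is correct, and it rests on the same foundation as the paper's: everything is extracted from Theorem~\ref{thm:strfin-monad-lifting} and its proof, which identifies the underlying set and operations of $T_\Vvar X$ with those of $T_{\wt{\Vvar}}|X|$. The difference is in how the functor $\Vvar \to \wt{\Vvar}$ is produced. You define it directly as the order-forgetting functor $A \mapsto |A|$ and then must check by hand that $|A|$ lies in $\wt{\Vvar}$, which you do correctly via the presentation of $\wt{\Vvar}$ by $\Eeq_V^o$ (antisymmetry turns the two inequations $s \leq t$ and $t \leq s$ into the equation $s = t$). The paper instead transports a concrete functor $H \colon \Pos^{\Tmon_\Vvar} \to \Set^{\wt{\Tmon}}$, $(A,\alpha) \mapsto (|A|,\alpha)$, back along the comparison isomorphisms $K$ and $\wt{K}$ of Lemma~\ref{lem:varietal-forgetful-monadic}; this yields the same functor but gets the codomain ``for free'' from the monad-level lifting, without invoking the explicit equational presentation of $\wt{\Vvar}$. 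Your version is more elementary and self-contained at the algebra level; the paper's is slightly slicker and reuses the monadicity machinery already in place. Both are complete proofs.
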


Indeed, given a variety $\Vvar$, let $\wt{\Tmon}$ be an ordinary monad of $\Set$ such that $\Tmon_\Vvar$ is a lifting of it.
The comparison functor is an isomorphism $K: \Vvar \to
\Pos^{\Tmon_\Vvar}$ concrete over $\Pos$ (Lemma~\ref{lem:varietal-forgetful-monadic}).
And we have a classical variety $\wt{\Vvar}$ with an analogous concrete isomorphism $\wt{K}: \wt{\Vvar} \to \Set^{\wt{\Tmon}}$ over $\Set$.
Define a concrete functor $H: \Pos^\Tmon \to \Set^{\wt{\Tmon}}$ over $\Set$ by the obvious rule: it sends an algebra $\alpha: TA \to A$ to $\alpha: \wt{T}|A| \to |A|$.
The desired functor is $\wt{K}^{-1} \comp H \comp K: \Vvar \to \wt{\Vvar}$.

\begin{example}
\label{ex:pos-monad-not-strongly-finitary}
We present a finitary lifting of a monad on $\Set$ to $\Pos$ which is not strongly finitary.

Consider all ordered algebras on two binary operations $+$ and $*$.
The full subcategory on all algebras for which the implication
\[
x \leq y \implies x+y \leq x*y
\]
holds yields the following monad $\Tmon$ on $\Pos$.
Given a poset $X$, the poset $TX$ contains all terms with variables in $X$ using $+$ and $*$, where the order on $TX$ is the smallest one such that
\begin{enumerate}
	\item $x \leq y$ in $X$ implies $x \leq y$ in $TX$,
	\item $+$ and $*$ are monotone, and
	\item $t + s \leq t*s$ for all terms $t \leq s$ in $TX$.
\end{enumerate}
Thus $\Tmon$ is a lifting of the monad on $\Set$ corresponding to two binary operations (and no equations).

The monad $\Tmon$ is not strongly finitary.
For example, it does not preserve the canonical coinserter (recall Remark~\ref{rem:poset-canonical-presentation}) of the chain $\Two$ given by $0 < 1$:
\[
	\begin{tikzcd}
	3
	\arrow[r, "p_1", bend left]
	\arrow[r, "p_0", swap, bend right]
	&
	2
	\arrow[r, "\id"]
	&
	\Two
\end{tikzcd}
\]
Indeed, in $T\Two$ we have $0 + 1 < 0 * 1$.
In contrast, this does not hold in the coinserter of $Tp_0$ and $Tp_1$.
We can describe the order of that coinserter as the smallest one that, besides conditions (1)-(3) above, also fulfills $t\leq s$ for terms such that $s$ is obtained by changing some $0$ in $t$ to $1$.
The down-set of the term $0*1$ in that coinserter consists of the following terms
\[
0+0 < 0*0 < 0*1.
\]
Thus, $\Tmon$ does not preserve the coinserter of $p_0$ and $p_1$.
\end{example}

\section{Conclusions}

Kelly and Power proved that every finitary monad $\Tmon$ on $\Pos$ has a presentation as a coequaliser of a parallel pair of monad morphisms between free monads on generalised signatures, see~\cite{kelly+power:adjunctions}.
In the present paper we derive an analogous result for strongly finitary monads: each such monad has a presentation as a coinserter of a parallel pair of monad morphisms between free monads $\Tmon_\Sigma$ on (classical) signatures $\Sigma$, see Construction~\ref{con:monad-coinserter}.
The move from coequalisers to coinserters is needed since the signatures used in~\cite{kelly+power:adjunctions} were substantially more general than those we use here: they were collections $\Sigma = (\Sigma_\Gamma)_{\Gamma \in \Pos_\fp}$ of posets $\Sigma_\Gamma$ indexed by finite posets.
However, the proof method we use is closely related to that in~\cite{kelly+power:adjunctions}.

We have proved that for (classical) varieties of ordered $\Sigma$-algebras the corresponding free-algebra monad on $\Pos$ is strongly finitary, ie.\ finitary and preserving reflexive coinserters.
Using this we proved that the category of
varieties of ordered algebras is dually equivalent to the category of
strongly finitary monads on $\Pos$.

In the future we plan extending our results to strongly finitary monads on more general $\V$-categories for closed monoidal categories $\V$, e.g.\ the category of small categories.
For general $\V$ it is interesting to know under which conditions strongly finitary functors are precisely the finitary ones preserving reflexive coinserters.
But the main question is whether strongly finitary monads correspond again to `naturally` defined varieties of algebras in $\V$.

\end{document}